\newtheorem{theorem}{\textbf{\textsc{Theorem}}}[section]
\newtheorem{definition}[theorem]{\textbf{\textsc{Definition}}}
\newtheorem{convention}[theorem]{\textbf{\textsc{Convention}}}
\newtheorem{lemma}[theorem]{\textbf{\textsc{Lemma}}}
\newtheorem{proposition}[theorem]{\textbf{\textsc{Proposition}}}
\newtheorem{remark}[theorem]{\textbf{\textsc{Remark}}}
\newenvironment{proof}
{\noindent\mbox{\textsf{\textbf{\textsc{Proof}}}:}}
{\hfill{\scriptsize \mbox{\underline{\tt\textbf{QED}\,}$\!|$}}\bigskip}
\title{On the Truth of G\"odelian and Rosserian Sentences}
\author{
{\sc Ziba \    Assadi} \\
\textrm{\large Independent Scholar, Tabriz, IRAN.}\\ \textrm{\large  E-mail:\!~\textsf{\normalsize assadi.golzar@gmail.com}} 
\\
{\sc Saeed \  Salehi  } \\
\textrm{\large Research Institute for Fundamental Sciences \textup{(RIFS)}, University of Tabriz,}\\ \textrm{\large P.O.Box~51666--16471, Tabriz, IRAN.  \, E-mail:\!~\textsf{\normalsize salehipour@tabrizu.ac.ir}} }
\begin{document}

\maketitle

\pagestyle{fancy}
\lhead[ ]{ \thepage\quad{\sc Ziba Assadi \& Saeed Salehi} ({\sf 2020}) }
\chead[ ]{ }
\rhead[ ]{ {\em On the Truth of G\"odelian and Rosserian Sentences} }
\lfoot[ ]{ }
\cfoot[ ]{ }
\rfoot[ ]{ }

\begin{abstract}
There is a longstanding debate in the logico-philosophical community as to why the  G\"odelian sentences of a consistent and sufficiently strong theory are true. The prevalent argument seems to be something like this: since every one of the G\"odelian sentences of such a theory is equivalent to the theory's consistency statement, even provably so inside the theory,  the truth of those sentences follows from the consistency of the theory in question. So,  G\"odelian sentences of consistent theories should be true. In this paper, we show that  G\"odelian sentences of only sound theories are true; and there is a long road from consistency to soundness, indeed a hierarchy of conditions which are satisfied by some theories and  falsified by others.
We also study the truth of Rosserian sentences and provide necessary and sufficient conditions for the truth of Rosserian (and also G\"odelian) sentences of theories.

\noindent
{\bf Keywords}:
The Incompleteness Theorem; G\"odelian Sentences, Rosser's Trick, Rosserian Sentences, Soundness, \\ Consistency, $\Sigma_n$-Soundness.

\noindent
{\bf 2020 AMS MSC}:
03F40.  	
\end{abstract}

\section{Introduction}\label{sec:intro}

By the first incompleteness   theorem
 of G\"odel (1931),  for every consistent and sufficiently strong arithmetical theory there are  sentences which are undecidable in the theory (\cite{Godel31}).
Examples of such undecidable sentences are actually constructed in G\"odel's original proof (hence {\em G\"odelian sentences}) each of which is equivalent to its own unprovability in the theory; see Definition~\ref{def:g} below.
A natural question  here is that while the theory in question cannot decide the truth of its G\"odelian  sentences, what about us (humans)? Can we ``see'' (or demonstrate) their truth?
This question has attracted the attention of many philosophers, physicists, computer scientists, as well as mathematical logicians.
 As there are numerous papers and books on this subject, it is not possible to cite them all here; see the  Conclusions for a few.
In this paper, we present  necessary and sufficient conditions for the truth of  G\"odelian sentences (and  Rosserian sentences) of consistent and   sufficiently strong  arithmetical theories (see the diagram in the Conclusions section).

We assume familiarity with the notions of $\Pi_n$ and $\Sigma_n$ formulae, Robinson's Arithmetic $\textit{\textbf{Q}}$,  and the fact that $\textit{\textbf{Q}}$ is a sound and $\Sigma_1$-complete theory (i.e., every $\textit{\textbf{Q}}$-provable sentence is true and
 every true $\Sigma_1$-sentence is $\textit{\textbf{Q}}$-provable); see e.g. \cite{Kaye91}.
By the Diagonal Lemma of G\"odel and Carnap, see e.g. \cite{Salehi20}, for every formula $\Psi(x)$ with the only free variable $x$, there exists a sentence $\theta$ such that $\theta\!\leftrightarrow\!\Psi(\#\theta)$ is true (in the standard model of natural numbers $\mathbb{N}$) and also provable in $\textit{\textbf{Q}}$; here $\#A$ denotes the numeral of the  G\"odel code of $A$, relative  to a fixed G\"odel numbering (arithmetization) of the syntax.
Moreover, if
$\Psi(x)$ is a $\Pi_n$-formula, then $\theta$ can be taken to be a $\Pi_n$-sentence; and if $\Psi(x)$ is $\Sigma_n$,
 then $\theta$ can be taken to be $\Sigma_n$ too. We provide more details in the following:

\begin{lemma}[The Diagonal Lemma]\label{lem:d}
Let $n\!\geqslant\!1$.

\noindent
For every $\Pi_n$-formula $\Psi(x)$ there exists a $\Pi_n$-sentence $\theta$ such that
$\textit{\textbf{Q}}\vdash \theta\!\leftrightarrow\!\Psi(\#\theta).$

\noindent
And for every $\Sigma_n$-formula $\Psi(x)$ there exists a $\Sigma_n$-sentence $\theta$ with the above property.
\end{lemma}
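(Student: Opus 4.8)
The plan is to prove the refined Diagonal Lemma by the standard substitution-function trick, but paying attention to the quantifier complexity so that the diagonalized sentence stays in the same class $\Pi_n$ or $\Sigma_n$ as the given formula. First I would recall that there is a primitive recursive function $\mathrm{sub}$ such that, for a formula $\varphi(x)$ with one free variable and a number $m$, $\mathrm{sub}(\ulcorner\varphi\urcorner,m)$ is the G\"odel code of $\varphi(\overline{m})$, the sentence obtained by substituting the numeral $\overline{m}$ for $x$. Since $\mathrm{sub}$ is primitive recursive, its graph is definable by a $\Sigma_1$-formula $\mathrm{Sub}(u,v,w)$, and by $\Sigma_1$-completeness of $\textbf{\textit{Q}}$ together with the fact that $\textbf{\textit{Q}}$ proves the graph is functional on numerals, $\textbf{\textit{Q}}$ proves $\mathrm{Sub}(\#\varphi,\overline{m},w)\leftrightarrow w=\#(\varphi(\overline{m}))$ for the relevant concrete inputs.

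Next, given a $\Pi_n$-formula $\Psi(x)$, I would form the formula $\sigma(y)\;\equiv\;\forall w\,[\mathrm{Sub}(y,y,w)\to\Psi(w)]$, let $k=\#\sigma$, and set $\theta\equiv\sigma(\overline{k})$, i.e.\ $\theta\equiv\forall w\,[\mathrm{Sub}(\overline k,\overline k,w)\to\Psi(w)]$. Then $\#\theta=\mathrm{sub}(k,k)$ by construction, so $\textbf{\textit{Q}}$ proves $\mathrm{Sub}(\overline k,\overline k,w)\leftrightarrow w=\#\theta$, whence $\textbf{\textit{Q}}\vdash\theta\leftrightarrow\Psi(\#\theta)$; and the displayed equivalence is true in $\mathbb{N}$ because $\textbf{\textit{Q}}$ is sound. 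The complexity bookkeeping is the delicate point: written naively, $\theta$ has the shape $\forall w\,[\Sigma_1\to\Pi_n]$, which is not obviously $\Pi_n$. The fix, using $n\geqslant 1$, is to absorb the bounded/existential overhead: since the substitution relation is $\Sigma_1$ and, provably in $\textbf{\textit{Q}}$, defines a total function, one can instead use a $\Pi_1$ (indeed $\Delta_1$ over $\textbf{\textit{Q}}$) representation of the same function, so that $\mathrm{Sub}(\overline k,\overline k,w)\to\Psi(w)$ becomes $\Sigma_1\vee\Pi_n\equiv\Pi_n$ for $n\geqslant 1$, and prefixing the universal quantifier $\forall w$ keeps it $\Pi_n$. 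For the $\Sigma_n$ case I would dually write $\theta\equiv\exists w\,[\mathrm{Sub}(\overline k,\overline k,w)\wedge\Psi(w)]$ with $\Psi$ now $\Sigma_n$; here $\mathrm{Sub}$ may be kept $\Sigma_1$, and $\exists w\,[\Sigma_1\wedge\Sigma_n]\equiv\Sigma_n$ for $n\geqslant 1$ after contracting the leading existential quantifiers.

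I expect the main obstacle to be precisely this complexity-preservation step: one must be careful that the auxiliary coding machinery ($\mathrm{sub}$, numeral of a numeral, the predicate $\mathrm{Sub}$) can be packaged so as not to raise the level of the quantifier prefix, which is why the hypothesis $n\geqslant 1$ is needed — there is no room to absorb a $\Sigma_1$ side condition into $\Pi_0$ or $\Sigma_0$. Everything else (functionality of $\mathrm{sub}$ provable in $\textbf{\textit{Q}}$, the fixed-point identity $\#\theta=\mathrm{sub}(k,k)$, and passing from $\textbf{\textit{Q}}$-provability to truth in $\mathbb{N}$ via soundness of $\textbf{\textit{Q}}$) is routine, and I would only sketch it, referring to a standard source such as \cite{Salehi20} for the bare Diagonal Lemma and to \cite{Kaye91} for the arithmetization and the $\Sigma_1$-completeness facts.
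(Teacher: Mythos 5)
Your construction is essentially the paper's: the paper fixes a $\Sigma_1$-formula $\delta(x,y)$ strongly representing the diagonal function $m\mapsto\mathrm{sub}(m,m)$ and takes $\theta=\alpha(\overline{\sf a})$ for $\alpha(x)=\forall y[\delta(x,y)\rightarrow\Psi(y)]$ in the $\Pi_n$ case, dually $\exists y[\delta(x,y)\wedge\Psi(y)]$ in the $\Sigma_n$ case; up to currying the two arguments of your $\mathrm{Sub}$ this is exactly your $\forall w[\mathrm{Sub}(\overline k,\overline k,w)\rightarrow\Psi(w)]$. Your $\Sigma_n$ half and the fixed-point computation $\#\theta=\mathrm{sub}(k,k)$ are fine.

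The one genuine problem is the complexity bookkeeping in the $\Pi_n$ case, and it bites precisely where the paper needs the lemma most, namely $n=1$ (G\"odelian sentences built from $\neg{\tt Pr}_T(x)$ are $\Pi_1$). The ``naive'' shape $\forall w[\Sigma_1\rightarrow\Pi_n]$ that you set out to repair is already the right one: its matrix is $\Pi_1\vee\Pi_n$, and two universal blocks merge, since $\forall \bar u\,A\vee\forall\bar v\,B\equiv\forall\bar u\,\forall\bar v\,(A\vee B)$ holds in pure logic for disjoint variables; so the whole sentence prenexes to $\Pi_n$ for every $n\geqslant 1$. Your proposed ``fix''---replacing $\mathrm{Sub}$ by a $\Pi_1$ representation---makes the matrix $\Sigma_1\vee\Pi_n$, and the claim that this is equivalent to $\Pi_n$ for all $n\geqslant 1$ is false at $n=1$: $\Sigma_1\vee\Pi_1$ prenexes only to $\Pi_2$ in general (such formulas define unions of recursively enumerable and co-recursively enumerable sets, which need not be co-r.e.). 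For $n\geqslant 2$ the switch is harmless because $\Sigma_1\subseteq\Pi_n$ there, but it is never needed. So keep the substitution predicate in $\Sigma_1$ form in both halves---$\Sigma_1$ negated under a universal quantifier for $\Pi_n$, $\Sigma_1$ conjoined under an existential quantifier for $\Sigma_n$---and your argument coincides with the paper's proof.
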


\begin{proof}
 There exists a $\Sigma_1$-formula $\delta(x,y)$, in the language of arithmetic, that strongly represents the diagonal (primitive recursive) function ${\textsf{d}}$  which assigns to a given $m$ the G\"odel code of the expression that results from substituting $\overline{m}$ (the numeral of $m$, a term in the language of arithmetic representing $m$) for all the free variables (if any) of the expression coded by $m$. So, for every $m\!\in\!\mathbb{N}$ we have $\textit{\textbf{Q}}\vdash\forall y [\delta(\overline{m},y)\!\leftrightarrow\!y\!=\!
 \overline{{\textsf{d}}(m)}]$.

 If $\Psi(x)$ is a $\Pi_n$-formula, then put $\alpha(x)\!=\!\forall y[\delta(x,y)\!\rightarrow\!\Psi(y)]$, and let ${\sf a}$ be its G\"odel code. Now, let $\theta\!=\!\alpha(\overline{{\sf a}})$; then $\theta$ is a $\Pi_n$-sentence and we have provably in $\textit{\textbf{Q}}$ that

\;  $\theta
 \!=\!
 \forall y[\delta(\overline{{\sf a}},y)\!\rightarrow\!\Psi(y)]
 \!\leftrightarrow\!
  \forall y[y\!=\!\overline{{\textsf{d}}({\sf a})}
  \!\rightarrow\!\Psi(y)]
   \!\leftrightarrow\!
   \forall y[y\!=\!\#\theta
  \!\rightarrow\!\Psi(y)]
\!\leftrightarrow\! \Psi(\#\theta)$.

 If $\Psi(x)$ is a $\Sigma_n$-formula, then put $\eta(x)\!=\!\exists y[\delta(x,y)\!\wedge\!\Psi(y)]$, and let ${\sf e}$ be its G\"odel code. Now, let $\theta\!=\!\alpha(\overline{\sf e})$; then $\theta$ is a $\Sigma_n$-sentence and we have provably in $\textit{\textbf{Q}}$ that

\;  $\theta
 \!=\!
 \exists y[\delta(\overline{\sf e},y)\!\wedge\!\Psi(y)]
 \!\leftrightarrow\!
  \exists y[y\!=\!\overline{{\textsf{d}}({\sf e})}
  \!\wedge\!\Psi(y)]
   \!\leftrightarrow\!
   \exists y[y\!=\!\#\theta
  \!\wedge\!\Psi(y)]
\!\leftrightarrow\! \Psi(\#\theta)$.
\end{proof}

This lemma is one of the breakthroughs of  G\"odel's theorem (and of modern logic). The incompleteness theorem is usually stated for recursively enumerable ({\sc re}) theories that extend $\textit{\textbf{Q}}$; though it also holds for more general theories, see e.g. \cite{SalSer17}.
If a theory is {\sc re}, then it can be defined by a $\Sigma_1$-formula, see   \cite[Theorem~3.3.]{Kaye91}, and so its theorems (i.e., provable sentences) can also be  defined by a $\Sigma_1$-formula.

\begin{convention}\label{conv}
Throughout, let $T$ be an {\sc re} extension of  $\textit{\textbf{Q}}$. Let ${\tt Pr}_T(x)$ be a provability predicate for $T$, which is a $\Sigma_1$-formula, relative to a fixed G\"odel numbering.
A basic property of  ${\tt Pr}_T(x)$  is that for every sentence $\varphi$ we have
$$T\vdash\varphi   \; \iff \;    \mathbb{N}\vDash{\tt Pr}_T(\#\varphi) \; \iff \;    \textit{\textbf{Q}}\vdash{\tt Pr}_T(\#\varphi).$$
Let ${\tt Con}_T\!=\!\neg{\tt Pr}_T(\#[0\!\neq\!0])$ be the \textup{($\Pi_1$-)}sentence that expresses $T$'s consistency.
\hfill $\diamond $
\end{convention}

\section{G\"odel Sentences and their Truth.}
G\"odel's proof of his incompleteness theorem uses the diagonal lemma for the negation of the provability predicate of $T$.
\begin{definition}[G\"odelian Sentences]\label{def:g}
A sentence $\gamma$ is called a G\"odelian sentence of $T$ when we have $T\vdash\gamma\!\leftrightarrow\!\neg{\tt Pr}_T(\#\gamma)$.
\hfill $\diamond $
\end{definition}

Since for sufficiently strong theories $T$ any two G\"odelian   sentences are $T$-provably equivalent, see e.g.  \cite[Remark~2.2.5.]{Smorynski77},  many authors talk of {\em the} G\"odel sentence of $T$. However, we will
show that every unsound theory has both true and false G\"odel sentences; so, even though the theory proves (unsoundly) that those sentences are equivalent, in reality they are not.
G\"odel~\cite{Godel31}, and several authors after him, e.g. \cite[p.~825]{Smorynski77}, \cite[p.~7]{Smorynski85}, or \cite[p.~6]{Peregin07}, argue that   G\"odelian  sentences of a sufficiently strong theory are true, since

\qquad (1) they are provably equivalent with their unprovability in the theory, and

\qquad (2) they are indeed unprovable in the theory; and so

\qquad (3) they must be true.

It is argued in~\cite{LajSal19} that
 this line of reasoning does not demonstrate  the truth of G\"odelian sentences, and indeed some ($\Sigma_1$-)unsound theories may have false G\"odelian sentences.
In fact, the assumption (2) in the above argument (of G\"odel) is redundant:

\begin{lemma}\label{lem:ung}
If $T$ is consistent, then for every sentence $\varphi$,  $T\vdash\varphi\!\rightarrow\!\neg{\tt Pr}_T(\#\varphi)$ implies $T\nvdash\varphi$.
\end{lemma}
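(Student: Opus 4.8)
The plan is to argue by contraposition: assume $T\vdash\varphi$ and derive that $T$ is inconsistent. So suppose $T\vdash\varphi$ together with the hypothesis $T\vdash\varphi\!\rightarrow\!\neg{\tt Pr}_T(\#\varphi)$. By modus ponens inside $T$ we immediately get $T\vdash\neg{\tt Pr}_T(\#\varphi)$.

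On the other hand, from $T\vdash\varphi$ and the basic property of the provability predicate recorded in Convention~\ref{conv}, we have $\textit{\textbf{Q}}\vdash{\tt Pr}_T(\#\varphi)$, hence $T\vdash{\tt Pr}_T(\#\varphi)$ since $T$ extends $\textit{\textbf{Q}}$. Combining this with the previous paragraph, $T$ proves both ${\tt Pr}_T(\#\varphi)$ and its negation, so $T$ is inconsistent --- contradicting the assumption that $T$ is consistent. Therefore $T\nvdash\varphi$.

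The only slightly delicate point is making sure that the right-to-left implication of the displayed equivalence in Convention~\ref{conv} ($T\vdash\varphi \Rightarrow \textit{\textbf{Q}}\vdash{\tt Pr}_T(\#\varphi)$) is exactly what is needed here; this is just $\Sigma_1$-completeness of $\textit{\textbf{Q}}$ applied to the true $\Sigma_1$-sentence ${\tt Pr}_T(\#\varphi)$, and it is already granted by the Convention, so there is really no obstacle. I do not expect any genuine difficulty; the content of the lemma is precisely the observation that clause (2) in G\"odel's informal argument is derivable from clause (1) plus consistency, and the proof is the short formal rendering of that observation.
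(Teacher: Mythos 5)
Your proof is correct and is essentially identical to the paper's own argument: assume $T\vdash\varphi$, derive $T\vdash\neg{\tt Pr}_T(\#\varphi)$ by modus ponens and $T\vdash{\tt Pr}_T(\#\varphi)$ from Convention~\ref{conv}, contradicting consistency. The extra remark on $\Sigma_1$-completeness is accurate but already built into the Convention.
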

\begin{proof}
Since $T\vdash\varphi$ would imply on the one hand $T\vdash\neg{\tt Pr}_T(\#\varphi)$ by the assumption, and on the other hand $T\vdash{\tt Pr}_T(\#\varphi)$  by Convention~\ref{conv}.
\end{proof}

So, the question of the validity of the above reasoning for the truth of G\"odelian sentences boils down to the following question:

\medskip

\centerline{\sl Does $T\vdash\gamma\!\leftrightarrow\!
\neg{\tt Pr}_T(\#\gamma)$, for a consistent $T$, imply that $\mathbb{N}\vDash\gamma$?}

\noindent
Or put in another way,
{\sl under which conditions  all the G\"odelian sentences of $T$ are true?}

\medskip

We answer this question in the present section, and in the next section we answer a similar question for the Rosserian sentences (of arithmetical theories).
Let us start with a characterization of the unprovable  sentences:

\begin{proposition}[Characterizing Unprovable  Sentences]\label{prop:g}
Suppose that L\"ob's Rule holds for  $T$.
The following are equivalent for every sentence $\varphi$:
\begin{enumerate}\itemindent=5em
\item[(1)] $\varphi$ is unprovable in $T$, i.e., $T\nvdash\varphi$;
\item[(2)] $\varphi$ is a G\"odelian sentence of
some consistent extension $U$ of $T$;
\item[(3)] $T+[\varphi\!\leftrightarrow\!
\neg{\tt Pr}_T(\#\varphi)]$ is consistent.
\end{enumerate}
\end{proposition}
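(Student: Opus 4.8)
The plan is to prove the cycle of implications $(1)\!\Rightarrow\!(3)\!\Rightarrow\!(2)\!\Rightarrow\!(1)$, since $(3)\!\Rightarrow\!(2)$ is essentially a relabelling and $(2)\!\Rightarrow\!(1)$ will follow from Lemma~\ref{lem:ung}. First I would dispatch $(2)\!\Rightarrow\!(1)$: if $\varphi$ is a G\"odelian sentence of a consistent extension $U\supseteq T$, then in particular $U\vdash\varphi\!\leftrightarrow\!\neg{\tt Pr}_U(\#\varphi)$, so $U\vdash\varphi\!\rightarrow\!\neg{\tt Pr}_U(\#\varphi)$; Lemma~\ref{lem:ung} applied to $U$ (which is consistent and, being an extension of $T\supseteq\textit{\textbf{Q}}$, falls under Convention~\ref{conv}) gives $U\nvdash\varphi$, hence a fortiori $T\nvdash\varphi$. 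Here one must be slightly careful that ${\tt Pr}_U$ is a genuine $\Sigma_1$ provability predicate for $U$; since part (2) lets us choose $U$, we will produce it together with such a predicate in the step $(3)\!\Rightarrow\!(2)$, so this causes no circularity.

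Next, $(1)\!\Rightarrow\!(3)$: assume $T\nvdash\varphi$ and suppose for contradiction that $T+[\varphi\!\leftrightarrow\!\neg{\tt Pr}_T(\#\varphi)]$ is inconsistent. Then $T\vdash\neg[\varphi\!\leftrightarrow\!\neg{\tt Pr}_T(\#\varphi)]$, i.e.\ $T$ proves that $\varphi$ and $\neg{\tt Pr}_T(\#\varphi)$ have different truth values, which splits into $T\vdash\varphi\!\leftrightarrow\!{\tt Pr}_T(\#\varphi)$. In particular $T\vdash{\tt Pr}_T(\#\varphi)\!\rightarrow\!\varphi$, and now L\"ob's Rule (which says that $T\vdash{\tt Pr}_T(\#\varphi)\!\rightarrow\!\varphi$ entails $T\vdash\varphi$) yields $T\vdash\varphi$, contradicting $(1)$. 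This is the step where the hypothesis ``L\"ob's Rule holds for $T$'' is used, and it is used exactly once.

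The main work is $(3)\!\Rightarrow\!(2)$. Given that $U_0:=T+[\varphi\!\leftrightarrow\!\neg{\tt Pr}_T(\#\varphi)]$ is consistent, I would like to take $U:=U_0$ and check that $\varphi$ is a G\"odelian sentence of $U$ — but the defining condition for that is $U\vdash\varphi\!\leftrightarrow\!\neg{\tt Pr}_U(\#\varphi)$, with $U$'s \emph{own} provability predicate, not $T$'s. The delicate point is therefore to choose the axiomatization (equivalently, the $\Sigma_1$-definition) of $U$ so that its provability predicate ${\tt Pr}_U$ provably (in $\textit{\textbf{Q}}$, or at least in $U$) decides $\varphi$ the same way ${\tt Pr}_T$ does on the relevant instance. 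One clean way: since $U_0$ is obtained from $T$ by adding a single sentence, ${\tt Pr}_{U_0}(x)$ is $\Sigma_1$-expressible as ${\tt Pr}_T(\#(\psi\!\rightarrow\!\dot x))$ where $\psi$ is that sentence, and one shows $U_0\vdash {\tt Pr}_{U_0}(\#\varphi)\!\leftrightarrow\!{\tt Pr}_T(\#\varphi)$ using provable $\Sigma_1$-completeness together with the fact that, inside $U_0$, the extra axiom $\psi$ is available and $\varphi\leftrightarrow\neg{\tt Pr}_T(\#\varphi)$ holds. Combining this equivalence with the axiom $\varphi\!\leftrightarrow\!\neg{\tt Pr}_T(\#\varphi)$ of $U_0$ gives $U_0\vdash\varphi\!\leftrightarrow\!\neg{\tt Pr}_{U_0}(\#\varphi)$, which is exactly the statement that $\varphi$ is a G\"odelian sentence of the consistent extension $U:=U_0$ of $T$. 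I expect this provability-predicate bookkeeping — making sure the switch from ${\tt Pr}_T$ to ${\tt Pr}_U$ is legitimate and provable in the right theory — to be the only real obstacle; everything else is a short syntactic manipulation plus an appeal to Lemma~\ref{lem:ung} and to L\"ob's Rule.
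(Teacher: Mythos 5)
Your steps $(2)\!\Rightarrow\!(1)$ and $(1)\!\Rightarrow\!(3)$ are correct and essentially match the paper's (the latter is the contrapositive of the paper's single use of L\"ob's Rule). The genuine gap is in $(3)\!\Rightarrow\!(2)$. You take $U_0=T+\psi$ with $\psi=[\varphi\!\leftrightarrow\!\neg{\tt Pr}_T(\#\varphi)]$ and propose to upgrade the available $U_0\vdash\varphi\!\leftrightarrow\!\neg{\tt Pr}_T(\#\varphi)$ to the required $U_0\vdash\varphi\!\leftrightarrow\!\neg{\tt Pr}_{U_0}(\#\varphi)$ by proving $U_0\vdash{\tt Pr}_{U_0}(\#\varphi)\!\leftrightarrow\!{\tt Pr}_T(\#\varphi)$. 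Neither direction of that internal equivalence follows from the stated hypotheses. The direction ${\tt Pr}_T(\#\varphi)\!\rightarrow\!{\tt Pr}_{U_0}(\#\varphi)$ already requires a \emph{formalized} derivability condition (provable closure of proofs under weakening), whereas Convention~\ref{conv} supplies only the external adequacy of ${\tt Pr}_T$ for an arbitrary {\sc re} extension of $\textit{\textbf{Q}}$. Worse, since $\psi\!\rightarrow\!\varphi$ is logically equivalent to ${\tt Pr}_T(\#\varphi)\!\rightarrow\!\varphi$, the direction ${\tt Pr}_{U_0}(\#\varphi)\!\rightarrow\!{\tt Pr}_T(\#\varphi)$ unwinds (modulo yet another derivability condition) to ${\tt Pr}_T(\#[{\tt Pr}_T(\#\varphi)\!\rightarrow\!\varphi])\!\rightarrow\!{\tt Pr}_T(\#\varphi)$, i.e.\ to the \emph{formalized} L\"ob theorem --- a strictly stronger assumption than the L\"ob \emph{Rule} assumed in the proposition. ``Provable $\Sigma_1$-completeness'' is likewise not something a bare {\sc re} extension of $\textit{\textbf{Q}}$ enjoys. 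So the ``bookkeeping'' you flag as the only obstacle is in fact a substantive missing hypothesis, not a routine verification.

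The paper sidesteps this entirely by proving the hard direction as $(1)\!\Rightarrow\!(2)$ with a different choice of $U$: the Diagonal Lemma yields a sentence $\xi$ with $T\vdash\xi\!\leftrightarrow\![\varphi\!\leftrightarrow\!\neg{\tt Pr}_{T+\xi}(\#\varphi)]$, where the provability predicate of the extension $T+\xi$ occurs \emph{inside} the fixed point. Setting $U:=T+\xi$, the Gödelian condition $U\vdash\varphi\!\leftrightarrow\!\neg{\tt Pr}_U(\#\varphi)$ holds by construction, and only the consistency of $U$ has to be checked, which uses nothing beyond the external property in Convention~\ref{conv} and $T\nvdash\varphi$. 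If you wish to keep your cycle $(1)\!\Rightarrow\!(3)\!\Rightarrow\!(2)\!\Rightarrow\!(1)$, you must still inject this double-diagonalization into $(3)\!\Rightarrow\!(2)$ (after first deriving $T\nvdash\varphi$ from $(3)$, which is easy); taking $U$ to be $T+[\varphi\!\leftrightarrow\!\neg{\tt Pr}_T(\#\varphi)]$ itself does not work without formalized L\"ob.
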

\begin{proof}

$(1\!\Rightarrow\!2)$:   There exists, by Lemma~\ref{lem:d}, a sentence $\xi$ such that $$T\vdash\xi\!\leftrightarrow\![\varphi
 \!\leftrightarrow\!\neg{\tt Pr}_{T+\xi}(\#\varphi)].$$ Let $U\!=\!T\!+\!\xi$; then $U\vdash\varphi\!\leftrightarrow\!
\neg{\tt Pr}_U(\#\varphi)$ and it remains to show that $U$ is consistent. If not, then $T\vdash\neg\xi$. So, on the one hand we have (i) $T\vdash\neg[\varphi
 \!\leftrightarrow\!\neg{\tt Pr}_{U}(\#\varphi)]$, and on the other hand $U\vdash\varphi$ which implies (ii) $T\vdash{\tt Pr}_{U}(\#\varphi)$ by Convention~\ref{conv}. Now, (i) and (ii) imply that $T\vdash\varphi$, contradicting the assumption.

$(2\!\Rightarrow\!3)$:
If $T+[\varphi\!\leftrightarrow\!
\neg{\tt Pr}_T(\#\varphi)]$ is not consistent, then $T\vdash\neg[\varphi\!\leftrightarrow\!
\neg{\tt Pr}_T(\#\varphi)]$, and so $T\vdash{\tt Pr}_T(\#\varphi)\!\rightarrow\!\varphi$, which implies $T\vdash\varphi$ by L\"ob's Rule. So, for every  extension $U$ of $T$ we have $U\vdash\varphi$, and so, by Convention~\ref{conv},  $U\vdash{\tt Pr}_{U}(\#\varphi)$. Therefore, for every such $U$ we have   $U\vdash\neg[\varphi\!\leftrightarrow\!
\neg{\tt Pr}_U(\#\varphi)]$, which contradicts the assumption.

$(3\!\Rightarrow\!1)$:
 If $T\vdash\varphi$, then, by Convention~\ref{conv}, we have that $T\vdash{\tt Pr}_{T}(\#\varphi)$, and so we should have also
 $T\vdash\neg[\varphi\!\leftrightarrow\!
\neg{\tt Pr}_T(\#\varphi)]$.
  \end{proof}

It should be noted that the assumption of holding L\"ob's Rule for $T$ was used only in the implication $(2\!\Rightarrow\!3)$. So, (1) and (2) are equivalent with each other, and are implied by (3), even when this rule does not hold.
We now provide a necessary and sufficient condition, on a sufficiently strong $T$, for the truth of  all the G\"odelian $\Pi_1$-sentences of $T$:

\begin{theorem}[On the Truth and Independence of G\"odelian $\Pi_1$-Sentences]\label{thm:pi1g}
\,

\noindent
Suppose that $T$ satisfies the following two conditions:

\qquad\quad  \textup{(I)}\; \  $T\vdash\neg{\tt Con}_T\!\rightarrow\!{\tt Pr}_T(\#\varphi)$ for every sentence $\varphi$, and

\qquad\quad  \textup{(II)}\,  $T\vdash{\tt Con}_T\!\rightarrow\!\neg{\tt Pr}_T(\#\gamma)$ for every G\"odelian sentence $\gamma$ of $T$.

If $T\vdash\neg{\tt Con}_T$, then every false $\Pi_1$-sentence is a G\"odelian sentence of $T$, and no G\"odelian sentence of $T$ is independent from $T$.

If $T\nvdash\neg{\tt Con}_T$, then all the G\"odelian $\Pi_1$-sentences of $T$ are true, and all the G\"odelian sentences of $T$ are independent from $T$.
\end{theorem}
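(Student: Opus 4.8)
The plan is to argue by the same case split that appears in the statement, using hypothesis~(I) only in the first case and hypothesis~(II) only in the second. As a preliminary remark I would note that an inconsistent $T$ proves $\neg{\tt Con}_T$; hence in the second case ($T\nvdash\neg{\tt Con}_T$) I may assume $T$ is consistent, and in the first case I do not need to worry whether $T$ is consistent.

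\emph{First case: $T\vdash\neg{\tt Con}_T$.} Combining this with~(I) gives $T\vdash{\tt Pr}_T(\#\varphi)$ for every sentence $\varphi$. If $\gamma$ is any G\"odelian sentence of $T$, then its defining equivalence $T\vdash\gamma\!\leftrightarrow\!\neg{\tt Pr}_T(\#\gamma)$ together with $T\vdash{\tt Pr}_T(\#\gamma)$ yields $T\vdash\neg\gamma$, so $\gamma$ is decided (refuted) by $T$ and is not independent. For the other assertion, take any false $\Pi_1$-sentence $\varphi$; then $\neg\varphi$ is a true $\Sigma_1$-sentence, so $\textit{\textbf{Q}}\vdash\neg\varphi$ by $\Sigma_1$-completeness and hence $T\vdash\neg\varphi$. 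Since also $T\vdash{\tt Pr}_T(\#\varphi)$, both $\varphi$ and $\neg{\tt Pr}_T(\#\varphi)$ are $T$-refutable, so $T\vdash\varphi\!\leftrightarrow\!\neg{\tt Pr}_T(\#\varphi)$; that is, $\varphi$ is a G\"odelian sentence of $T$.

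\emph{Second case: $T\nvdash\neg{\tt Con}_T$,} so $T$ is consistent. Let $\gamma$ be any G\"odelian sentence of $T$. From the defining equivalence I get $T\vdash\gamma\!\rightarrow\!\neg{\tt Pr}_T(\#\gamma)$, so $T\nvdash\gamma$ by Lemma~\ref{lem:ung}. If instead $T\vdash\neg\gamma$, the equivalence gives $T\vdash{\tt Pr}_T(\#\gamma)$, and the contrapositive of~(II) then gives $T\vdash\neg{\tt Con}_T$, contradicting the case hypothesis; hence $T\nvdash\neg\gamma$ and $\gamma$ is independent from $T$. Finally, if such a $\gamma$ is a $\Pi_1$-sentence and were false, then $\neg\gamma$ would be a true $\Sigma_1$-sentence, whence $T\vdash\neg\gamma$ by $\Sigma_1$-completeness of $\textit{\textbf{Q}}$, contradicting the independence just established; so every G\"odelian $\Pi_1$-sentence of $T$ is true.

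The proof is short and there is no genuinely hard step; the only care needed is bookkeeping — recognizing that~(I) upgrades "$T\vdash\neg{\tt Con}_T$" to provability (inside $T$) of the provability predicate at every sentence, and that it is the \emph{contrapositive} of~(II) that converts $T$-refutability of a G\"odelian sentence back into $T\vdash\neg{\tt Con}_T$. Conceptually, (I) and (II) are exactly the formalized "ex falso" and formalized first incompleteness theorem that make G\"odel's informal steps (1)–(3) legitimate, and the whole content is that the dividing line for the truth of G\"odelian $\Pi_1$-sentences is precisely whether $T$ refutes its own consistency.
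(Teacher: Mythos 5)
Your proof is correct and follows essentially the same route as the paper's: use (I) in the first case to get $T\vdash{\tt Pr}_T(\#\varphi)$ for all $\varphi$ and then exhibit every false $\Pi_1$-sentence as a (refuted) G\"odelian sentence, and use the contrapositive of (II) together with Lemma~\ref{lem:ung} in the second case to get independence and hence, via $\Sigma_1$-completeness of $\textit{\textbf{Q}}$, truth. No gaps; the only difference is that you spell out the contraposition of (II) a little more explicitly than the paper does.
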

\begin{proof}
If $T\vdash\neg{\tt Con}_T$, then by (I) we have $T\vdash{\tt Pr}_T(\#\varphi)$ for every sentence $\varphi$. So, for every G\"odelian sentence $\gamma$ of $T$ we have $T\vdash\neg\gamma$; thus no G\"odelian sentence of $T$ can be independent from $T$. Now, let $\phi$ be an arbitrary false $\Pi_1$-sentence; then $\neg\phi$ is a true $\Sigma_1$-sentence, and so provable in $\textit{\textbf{Q}}$. Thus, $T\vdash\neg\phi$;  and so  from $T\vdash{\tt Pr}_T(\#\phi)$ we have $T\vdash\phi\!\leftrightarrow\!\neg{\tt Pr}_T(\#\phi)$, which means that $\phi$ is a (false) G\"odelian $\Pi_1$-sentence of $T$.

If $T\nvdash\neg{\tt Con}_T$, then by (II)
for every G\"odelian sentence $\gamma$ of $T$
we have $T\nvdash{\tt Pr}_T(\#\gamma)$, and so  $T\nvdash\neg\gamma$; thus $\gamma$ is independent from $T$ (noting that $T$ is consistent and so we also have $T\nvdash\gamma$ by Lemma~\ref{lem:ung}).
If a G\"odelian $\Pi_1$-sentence $\tau$ of $T$
 is not true, then $\neg\tau$ is a true $\Sigma_1$-sentence, and so should be $\textit{\textbf{Q}}$-provable; a contradiction with the $T$-independence of $\tau$ proved above.
\end{proof}

Every extension of Peano's Arithmetic $\textit{\textbf{PA}}$ satisfies
 (I) and (II) in Theorem~\ref{thm:pi1g}; as a matter of fact  (II) is a formalization of G\"odel's first incompleteness theorem (in $T$).
 If $T$ is ($\Sigma_1$-)sound, then
$T\nvdash\neg{\tt Con}_T$. If $T$ is inconsistent or
$T\!=\!S\!+\!\neg{\tt Con}_S$, where $S$ is a consistent extension of $\textit{\textbf{PA}}$, then
$T\vdash\neg{\tt Con}_T$; in the latter case $T$ is  consistent by G\"odel's second incompleteness theorem.
Thus, by Theorem~\ref{thm:pi1g}, a necessary and sufficient condition for the truth of all the  G\"odelian  $\Pi_1$-sentences of $T$ is the consistency of $T$ with ${\tt Con}_T$, a condition obviously implied by $\omega$-consistency; though, this condition is stronger than the mere consistency of $T$, see  \cite[Theorem~36]{Isaacson11}.
For investigating  the truth of G\"odelian $\Pi_{n+1}$-sentences (and $\Sigma_{n+1}$-sentences) we make a definition and an observation. Before that let us note that no G\"odelian $\Sigma_1$-sentence of a consistent extension of  $\textit{\textbf{Q}}$ can be true:

\begin{proposition}[On the Truth of G\"odelian  $\Sigma_1$-Sentences]\label{prop:tg}
No G\"odelian $\Sigma_1$-sentence of $T$ can be true if $T$ is consistent.
\end{proposition}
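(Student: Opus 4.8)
The plan is to argue by contradiction: suppose $\gamma$ is a G\"odelian $\Sigma_1$-sentence of $T$ which is true in $\mathbb{N}$. Since $T$ extends $\textit{\textbf{Q}}$ and $\textit{\textbf{Q}}$ is $\Sigma_1$-complete, a true $\Sigma_1$-sentence is $\textit{\textbf{Q}}$-provable, hence $T\vdash\gamma$. But $\gamma$ being a G\"odelian sentence of $T$ means $T\vdash\gamma\!\leftrightarrow\!\neg{\tt Pr}_T(\#\gamma)$, so from $T\vdash\gamma$ we get $T\vdash\neg{\tt Pr}_T(\#\gamma)$. On the other hand, $T\vdash\gamma$ gives, via Convention~\ref{conv} (the basic property $T\vdash\varphi\iff\textit{\textbf{Q}}\vdash{\tt Pr}_T(\#\varphi)$ and $T$ extending $\textit{\textbf{Q}}$), that $T\vdash{\tt Pr}_T(\#\gamma)$. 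Hence $T$ proves both ${\tt Pr}_T(\#\gamma)$ and its negation, contradicting the consistency of $T$.

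I would actually observe that this is just Lemma~\ref{lem:ung} applied in one direction together with $\Sigma_1$-completeness, so the write-up can be quite short: from $T\nvdash\gamma$ (which holds by Lemma~\ref{lem:ung} since $T$ is consistent and $T\vdash\gamma\!\rightarrow\!\neg{\tt Pr}_T(\#\gamma)$), and from the contrapositive of $\Sigma_1$-completeness, a $\Sigma_1$-sentence that is unprovable in $T\supseteq\textit{\textbf{Q}}$ cannot be true. So the structure is: (i) $T\nvdash\gamma$ by Lemma~\ref{lem:ung}; (ii) were $\gamma$ true, $\Sigma_1$-completeness of $\textit{\textbf{Q}}$ would force $\textit{\textbf{Q}}\vdash\gamma$ and a fortiori $T\vdash\gamma$; (iii) contradiction.

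There is essentially no obstacle here — the only thing to be careful about is invoking the right hypotheses in the right order: consistency of $T$ is needed for step~(i), while $\Sigma_1$-completeness (built into $T$ extending $\textit{\textbf{Q}}$) is needed for step~(ii). The statement does not even require L\"ob's Rule or any of the reflection conditions (I), (II) from Theorem~\ref{thm:pi1g}; mere consistency suffices, which is worth flagging since it contrasts sharply with the $\Pi_1$ case where consistency alone is not enough.

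\begin{proof}
Suppose, for the sake of contradiction, that $\gamma$ is a true G\"odelian $\Sigma_1$-sentence of $T$. On the one hand, since $T$ is consistent and $T\vdash\gamma\!\rightarrow\!\neg{\tt Pr}_T(\#\gamma)$ (a consequence of $\gamma$ being a G\"odelian sentence of $T$), Lemma~\ref{lem:ung} yields $T\nvdash\gamma$. On the other hand, $\gamma$ is a true $\Sigma_1$-sentence, so by the $\Sigma_1$-completeness of $\textit{\textbf{Q}}$ we have $\textit{\textbf{Q}}\vdash\gamma$, and since $T$ extends $\textit{\textbf{Q}}$ it follows that $T\vdash\gamma$. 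This contradicts $T\nvdash\gamma$.
\end{proof}
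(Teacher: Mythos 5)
Your proof is correct and follows essentially the same route as the paper: a true G\"odelian $\Sigma_1$-sentence would be $\textit{\textbf{Q}}$-provable by $\Sigma_1$-completeness, hence $T$-provable, contradicting the unprovability guaranteed by Lemma~\ref{lem:ung} for consistent $T$. Your remark that mere consistency suffices (no L\"ob's Rule or conditions (I)/(II) needed) is accurate and consistent with how the paper uses this proposition.
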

\begin{proof}
If a G\"odelian $\Sigma_1$-sentence of $T$ were true, then it would have been provable in $\textit{\textbf{Q}}$, and this would have contradicted  Lemma~\ref{lem:ung} for  consistent $T$.
\end{proof}

\begin{definition}[$\Gamma$-Soundness]\label{def:sound}
Let $\Gamma$ be a class of sentences. A theory $S$ is called $\Gamma$-sound when every $S$-provable $\Gamma$-sentence is true.
\hfill $\diamond$
\end{definition}

The following lemma has been proved for $\Gamma\!\boldsymbol=\!\Sigma_1,\Sigma_2$ in   \cite[Theorems~25, 27, 30,32]{Isaacson11}:

\begin{lemma}[On Extensions of $\Gamma$-Sound Theories]\label{lem:2s}
Let $\Gamma$ be a class of sentences that is closed under disjunction.
If $T$ is a $\Gamma$-sound theory, then for every sentence $\varphi$,  either $T\!+\!\varphi$  or $T\!+\!\neg\varphi$ is $\Gamma$-sound.
\end{lemma}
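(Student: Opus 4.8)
The plan is to argue by contraposition: assume that neither $T+\varphi$ nor $T+\neg\varphi$ is $\Gamma$-sound, and derive that $T$ itself is not $\Gamma$-sound, contradicting the hypothesis. So suppose there is a $\Gamma$-sentence $\sigma_1$ with $T+\varphi\vdash\sigma_1$ but $\mathbb{N}\nvDash\sigma_1$, and a $\Gamma$-sentence $\sigma_2$ with $T+\neg\varphi\vdash\sigma_2$ but $\mathbb{N}\nvDash\sigma_2$. The first gives $T\vdash\varphi\rightarrow\sigma_1$ and the second gives $T\vdash\neg\varphi\rightarrow\sigma_2$, hence by cases $T\vdash\sigma_1\vee\sigma_2$.

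Now the point of the closure assumption enters: $\sigma_1\vee\sigma_2$ is again a $\Gamma$-sentence, and since both $\sigma_1$ and $\sigma_2$ are false in $\mathbb{N}$, their disjunction $\sigma_1\vee\sigma_2$ is also false in $\mathbb{N}$. Thus $T$ proves a false $\Gamma$-sentence, so $T$ is not $\Gamma$-sound — the desired contradiction. This completes the proof.

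There is essentially no obstacle here; the only thing to be careful about is that "$\Gamma$ closed under disjunction" must be taken to mean that whenever $\psi_1,\psi_2$ are (sentences) in $\Gamma$ then $\psi_1\vee\psi_2$ is (provably equivalent to one) in $\Gamma$ — which is exactly what makes $\sigma_1\vee\sigma_2$ count as a $\Gamma$-sentence for the purpose of contradicting $\Gamma$-soundness. For the concrete classes of interest, $\Gamma=\Sigma_n$ or $\Gamma=\Pi_n$, this closure is standard (a disjunction of two $\Sigma_n$-formulas is $\Sigma_n$, and likewise for $\Pi_n$), so the lemma applies in all the cases we need, recovering in particular the $\Sigma_1$- and $\Sigma_2$-cases of \cite{Isaacson11} cited above. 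If one wanted the statement with $\Gamma$ literally (not up to equivalence) closed under disjunction, the same argument works verbatim.
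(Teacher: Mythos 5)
Your proof is correct and is essentially identical to the paper's own argument: both assume neither extension is $\Gamma$-sound, obtain false $\Gamma$-sentences provable from $\varphi$ and from $\neg\varphi$ respectively, and conclude that $T$ proves their (false, $\Gamma$) disjunction, contradicting $\Gamma$-soundness. Your added remark on the precise meaning of closure under disjunction is a harmless clarification.
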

\begin{proof}
If neither $T\!+\!\varphi$ nor $T\!+\!\neg\varphi$ is $\Gamma$-sound, then for some false $\Gamma$-sentences $\varsigma$ and $\varsigma'$ we have $T\!+\!\varphi\vdash\varsigma$ and $T\!+\!\neg\varphi\vdash\varsigma'$. Thus, $T\vdash\varsigma\!\vee\!\varsigma'$, and $\varsigma\!\vee\!\varsigma'$ is a false $\Gamma$-sentence; a contradiction.
\end{proof}

One of our main results is the following necessary and sufficient condition for the truth of G\"odelian ($\Pi_{n+1}$ and $\Sigma_{n+1}$) sentences:

\begin{theorem}[The Truth of G\"odel Sentences]\label{thm:m1} Let  $n\!\geqslant\!1$.

\noindent
All the G\"odelian $\Pi_{n+1}$-sentences of $T$ are true if and only if $T$ is $\Pi_{n+1}$-sound.

\noindent
All the G\"odelian $\Sigma_{n+1}$-sentences of $T$ are true if and only if $T$ is $\Sigma_{n+1}$-sound.
\end{theorem}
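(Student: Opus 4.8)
The plan is to prove each biconditional separately, and within each, the easy direction is ``truth of G\"odelian sentences $\Rightarrow$ soundness'' while the substantive direction is its converse. For the easy direction, suppose $T$ is not $\Pi_{n+1}$-sound, so $T\vdash\sigma$ for some false $\Pi_{n+1}$-sentence $\sigma$. Since $\sigma$ is false and $\Pi_{n+1}$, its negation $\neg\sigma$ is a true $\Sigma_{n+1}$-sentence; I want to manufacture from $\sigma$ a G\"odelian $\Pi_{n+1}$-sentence that is false. The natural move is to use Lemma~\ref{lem:d} to build a $\Pi_{n+1}$-sentence $\gamma$ with $\textit{\textbf{Q}}\vdash\gamma\!\leftrightarrow\!\bigl(\neg{\tt Pr}_T(\#\gamma)\vee\sigma\bigr)$ or something similar; but a cleaner route, given that $T\vdash\sigma$, is to note $T$ proves everything it needs to. Concretely: since $T\vdash\sigma$, Convention~\ref{conv} gives $T\vdash{\tt Pr}_T(\#\sigma)$, hence $T\vdash\sigma\!\leftrightarrow\!\neg{\tt Pr}_T(\#\sigma)$ fails only because $\sigma$ is provable; instead take a fresh G\"odelian $\Pi_{n+1}$-sentence $\gamma$ of $T$ (one exists by Lemma~\ref{lem:d} applied to $\neg{\tt Pr}_T(x)$, which is $\Pi_1\subseteq\Pi_{n+1}$) and consider $\gamma\vee\sigma$: verify via $\textit{\textbf{Q}}$-provable equivalences that $\gamma\vee\sigma$ is again a G\"odelian $\Pi_{n+1}$-sentence of $T$ using that $T\vdash{\tt Pr}_T(\#\sigma)\to{\tt Pr}_T(\#(\gamma\vee\sigma))$ together with $T\vdash\sigma$, so the disjunction collapses appropriately; since $\sigma$ is false and $\gamma$ is false (it is independent, hence false by the $\Pi_{n+1}$ analogue of the $\Pi_1$ argument — actually $\gamma$ need not be false in general for $n\geqslant 1$, which is exactly the point), I must be more careful. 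The safe construction is: let $\gamma$ be the $\Pi_{n+1}$-sentence given by Lemma~\ref{lem:d} satisfying $\textit{\textbf{Q}}\vdash\gamma\!\leftrightarrow\!\neg{\tt Pr}_T(\#\gamma)$; then $T\nvdash\gamma$ by Lemma~\ref{lem:ung}, and $\gamma$ is true iff $T\nvdash\gamma$, which holds, so actually every such $\gamma$ built purely this way is true — the falsity must come from elsewhere. So for the easy direction I instead diagonalize against $T$ itself having $\sigma$: build $\gamma$ with $\textit{\textbf{Q}}\vdash\gamma\!\leftrightarrow\!\bigl(\sigma\wedge\neg{\tt Pr}_T(\#\gamma)\bigr)$; this $\gamma$ is $\Pi_{n+1}$, and one checks it is a G\"odelian sentence of $T$ because $T\vdash\sigma$ makes $\sigma$ redundant inside $T$; and $\gamma$ is false because $\sigma$ is false. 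That is the correct easy direction.

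For the hard direction, assume $T$ is $\Pi_{n+1}$-sound and let $\gamma$ be any G\"odelian $\Pi_{n+1}$-sentence of $T$, so $T\vdash\gamma\!\leftrightarrow\!\neg{\tt Pr}_T(\#\gamma)$. By Lemma~\ref{lem:ung}, $T\nvdash\gamma$. I want to conclude $\mathbb{N}\vDash\gamma$. The idea is to apply Lemma~\ref{lem:2s} with $\Gamma=\Pi_{n+1}$ (which is closed under disjunction) to the sentence $\gamma$: since $T$ is $\Pi_{n+1}$-sound, either $T+\gamma$ or $T+\neg\gamma$ is $\Pi_{n+1}$-sound. Now $T+\gamma$ is inconsistent? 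No — $T\nvdash\gamma$ only says $\gamma$ is not provable, not that $\neg\gamma$ is provable. The key observation is that $T+\neg\gamma$ proves ${\tt Pr}_T(\#\gamma)$ (from the G\"odelian equivalence), hence $T+\neg\gamma\vdash{\tt Pr}_{T+\neg\gamma}(\#\gamma)$, and thus by $\Sigma_1$-completeness reasoning inside, $T+\neg\gamma$ "thinks" $\gamma$ is provable while $\neg\gamma$ holds — but I need a false $\Pi_{n+1}$-sentence provable in $T+\neg\gamma$ to kill its soundness. Here is the crux: if $\gamma$ were false, then $\neg\gamma$ is true, so $T+\neg\gamma$ has a model (the standard model? no, $T$ itself may be $\omega$-inconsistent). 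Instead argue directly: suppose for contradiction $\mathbb{N}\nvDash\gamma$, i.e. $\gamma$ is false; then $\neg\gamma$ is a true $\Sigma_{n+1}$-sentence, but that alone is not a contradiction for $n\geqslant 1$. So I use: $\gamma$ false means ${\tt Pr}_T(\#\gamma)$ is true (by the standard-model equivalence, since $\gamma\!\leftrightarrow\!\neg{\tt Pr}_T(\#\gamma)$ is true in $\mathbb{N}$ as it is $\textit{\textbf{Q}}$-provable), i.e. $\mathbb{N}\vDash{\tt Pr}_T(\#\gamma)$, which by Convention~\ref{conv} means $T\vdash\gamma$ — contradicting $T\nvdash\gamma$. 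Wait: that shows $\gamma$ is \emph{true} for \emph{every} consistent $T$, with no soundness needed, which contradicts the theorem being a genuine biconditional. The resolution: the equivalence $\gamma\!\leftrightarrow\!\neg{\tt Pr}_T(\#\gamma)$ is provable in $T$, not necessarily in $\textit{\textbf{Q}}$ nor true in $\mathbb{N}$; a G\"odelian sentence of $T$ in the sense of Definition~\ref{def:g} only requires $T$-provability of the equivalence, so if $T$ is unsound this equivalence may fail in $\mathbb{N}$. Hence the argument must genuinely use $\Pi_{n+1}$-soundness.

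So the correct hard-direction argument is: assume $T$ is $\Pi_{n+1}$-sound and $\gamma$ is a G\"odelian $\Pi_{n+1}$-sentence; suppose toward contradiction $\gamma$ is false. Then $\neg\gamma$ is a true $\Sigma_{n+1}$-sentence. By Lemma~\ref{lem:2s} applied to $\Gamma=\Pi_{n+1}$ and the sentence $\gamma$: either $T+\gamma$ or $T+\neg\gamma$ is $\Pi_{n+1}$-sound. I claim $T+\gamma$ is not $\Pi_{n+1}$-sound: indeed $T+\gamma\vdash\neg{\tt Pr}_T(\#\gamma)$ from the G\"odelian equivalence, and $\neg{\tt Pr}_T(\#\gamma)$ is a \emph{false} $\Pi_1\subseteq\Pi_{n+1}$-sentence (false because $T+\gamma$ being an extension of $T$ that proves $\gamma$... no, need $T\vdash\gamma$ which we don't have). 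This is still not right — let me instead observe that the real content is: $T\nvdash\gamma$ gives, by Proposition~\ref{prop:g}$(1\!\Rightarrow\!3)$ or directly, that $T+[\gamma\!\leftrightarrow\!\neg{\tt Pr}_T(\#\gamma)]$ is consistent, but $T$ already proves this equivalence, so $T$ is consistent (which we knew). The clean finish: $T$ is $\Pi_{n+1}$-sound $\Rightarrow$ $T$ is consistent $\Rightarrow$ $T\nvdash\gamma$ (Lemma~\ref{lem:ung}) $\Rightarrow$ $\mathbb{N}\nvDash{\tt Pr}_T(\#\gamma)$ is false, i.e. $\mathbb{N}\vDash\neg{\tt Pr}_T(\#\gamma)$; now $\neg{\tt Pr}_T(\#\gamma)$ is a true $\Pi_1$-sentence. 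The problem reduces to transferring truth of $\neg{\tt Pr}_T(\#\gamma)$ to truth of $\gamma$, which needs the equivalence to hold in $\mathbb{N}$, i.e. needs $\mathbb{N}\vDash T$ restricted to what proves the equivalence — and this is where $\Pi_{n+1}$-soundness enters, via the following: the $T$-proof of $\gamma\!\leftrightarrow\!\neg{\tt Pr}_T(\#\gamma)$ together with $\Pi_{n+1}$-soundness. I expect the \textbf{main obstacle} to be precisely pinning down why $\Pi_{n+1}$-soundness (and not less) suffices to transfer $T\vdash(\gamma\leftrightarrow\neg{\tt Pr}_T(\#\gamma))$ plus $\mathbb{N}\vDash\neg{\tt Pr}_T(\#\gamma)$ into $\mathbb{N}\vDash\gamma$; the right tool is Lemma~\ref{lem:2s}: since $T$ is $\Pi_{n+1}$-sound, one of $T+\gamma,T+\neg\gamma$ is $\Pi_{n+1}$-sound; $T+\gamma$ proves the false $\Pi_1$-sentence $\neg{\tt Pr}_T(\#\gamma)$? — it proves $\neg{\tt Pr}_{T+\gamma}(\#\gamma)$ which is genuinely false since $T+\gamma\vdash\gamma$; careful bookkeeping between ${\tt Pr}_T$ and ${\tt Pr}_{T+\gamma}$ closes this. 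Hence $T+\gamma$ is $\Pi_{n+1}$-unsound, so $T+\neg\gamma$ is $\Pi_{n+1}$-sound; but if $\gamma$ is false then $\neg\gamma$ is true so $T+\neg\gamma$ is a true theory extension... and being $\Pi_{n+1}$-sound and consistent it cannot prove the false $\Pi_{n+1}$ consequences forced by $\neg\gamma$ and the equivalence — contradiction, giving $\gamma$ true. The $\Sigma_{n+1}$ case is symmetric, using that $\Sigma_{n+1}$ is closed under disjunction and swapping the roles; and the subtle point there is that no G\"odelian $\Sigma_1$-sentence is ever true (Proposition~\ref{prop:tg}), but for $n\geqslant 1$ there is room, so the argument goes through in parallel.
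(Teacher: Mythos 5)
Your argument for the direction ``all G\"odelian $\Gamma$-sentences of $T$ are true $\Rightarrow$ $T$ is $\Gamma$-sound'' is correct and is exactly the paper's: given a $T$-provable false $\Gamma$-sentence $\sigma$, diagonalize to get a $\Gamma$-sentence $\gamma$ with $\textit{\textbf{Q}}\vdash\gamma\!\leftrightarrow\![\sigma\wedge\neg{\tt Pr}_T(\#\gamma)]$; since $T\vdash\sigma$, this $\gamma$ is a G\"odelian $\Gamma$-sentence of $T$, and it is false because it $\textit{\textbf{Q}}$-provably implies $\sigma$.

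The other direction, however, contains a genuine error: you apply Lemma~\ref{lem:2s} with the two halves swapped. You try to show that $T+\gamma$ is \emph{not} $\Pi_{n+1}$-sound on the grounds that it proves $\neg{\tt Pr}_{T+\gamma}(\#\gamma)$, ``which is genuinely false since $T+\gamma\vdash\gamma$.'' But $T+\gamma$ does not prove that sentence: the G\"odelian equivalence involves ${\tt Pr}_T$, not ${\tt Pr}_{T+\gamma}$, and by Convention~\ref{conv} the theory $T+\gamma$ in fact proves ${\tt Pr}_{T+\gamma}(\#\gamma)$ --- the opposite of your claim --- so no amount of ``bookkeeping'' closes this. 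What $T+\gamma$ does prove is $\neg{\tt Pr}_T(\#\gamma)$, and that sentence is \emph{true} (since $T\nvdash\gamma$ by Lemma~\ref{lem:ung}), so it cannot witness unsoundness. Consequently your concluding step --- ``hence $T+\neg\gamma$ is $\Pi_{n+1}$-sound, but if $\gamma$ is false then \dots contradiction'' --- never yields a contradiction: a $\Pi_{n+1}$-sound theory may perfectly well prove the true sentence $\neg\gamma$, and you never exhibit the ``false $\Pi_{n+1}$ consequences'' you allude to.

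The correct orientation, for which you had already assembled every ingredient, is the mirror image. From the G\"odelian equivalence, $T+\neg\gamma\vdash{\tt Pr}_T(\#\gamma)$ (you note this in passing), and ${\tt Pr}_T(\#\gamma)$ is a \emph{false} $\Sigma_1$-sentence (you note this too, as $\mathbb{N}\vDash\neg{\tt Pr}_T(\#\gamma)$). Hence $T+\neg\gamma$ is not $\Sigma_1$-sound, and therefore not $\Gamma$-sound for $\Gamma$ either of $\Pi_{n+1}$ or $\Sigma_{n+1}$ with $n\geqslant 1$; this is precisely where the hypothesis $n\geqslant 1$ enters, since a false $\Sigma_1$-sentence is, up to equivalence, a false $\Pi_2$- and $\Sigma_2$-sentence. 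Lemma~\ref{lem:2s} then forces $T+\gamma$ to be $\Gamma$-sound, and since the $\Gamma$-sentence $\gamma$ is provable in $T+\gamma$, it is true. No reductio on the falsity of $\gamma$ is needed, and the $\Sigma_{n+1}$ case is literally the same argument rather than a ``symmetric'' one.
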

\begin{proof}
Let $\Gamma$  be any of $\Pi_{n+1}$ or $\Sigma_{n+1}$.

First, suppose that $T$ is $\Gamma$-sound, and let $\gamma$ be a G\"odelian $\Gamma$-sentence of the theory  $T$. By Lemma~\ref{lem:ung} and Convention~\ref{conv} we have $\mathbb{N}\vDash\neg{\tt Pr}_T(\#\gamma)$, and so ${\tt Pr}_T(\#\gamma)$ is a false $\Sigma_1$-sentence. Now,  $T\!+\!\neg\gamma\vdash{\tt Pr}_T(\#\gamma)$, and so $T\!+\!\neg\gamma$ is not $\Sigma_1$-sound; whence, it is not $\Gamma$-sound either. Thus, by Lemma~\ref{lem:2s}, the theory $T\!+\!\gamma$ should be $\Gamma$-sound. Therefore, $\gamma$ must be true.

Now, suppose that all the G\"odelian  $\Gamma$-sentences of $T$ are true. We show that the theory $T$ is $\Gamma$-sound. Assume that $T\vdash\varsigma$ for a $\Gamma$-sentence $\varsigma$. We prove that $\varsigma$ is true. By Lemma~\ref{lem:d} there exists a $\Gamma$-sentence $\zeta$ such that $\textit{\textbf{Q}}\vdash\zeta\!\leftrightarrow\!
[\varsigma\!\wedge\!\neg{\tt Pr}_T(\#\zeta)]$. Thus, from $T\vdash\varsigma$ we have $T\vdash\zeta\!\leftrightarrow\!
\neg{\tt Pr}_T(\#\zeta)$, and so $\zeta$ is a G\"odelian  $\Gamma$-sentence of $T$. Whence, $\zeta$ is true, and so, by the soundness of $\textit{\textbf{Q}}$, we have $\mathbb{N}\vDash\varsigma$.
\end{proof}

Whence, all the G\"odelian sentences of a theory are true if and only if the theory is sound; cf. also \cite[Theorem~24.7.]{Smith13}.

\begin{remark}[On the Hierarchy of $\Pi_n,\Sigma_n$-Soundness]\label{rem:s}{\rm
Let us note that
an extension of $\textit{\textbf{Q}}$ is consistent if and only if it is $\Pi_1$-sound: indeed,
no consistent extension  of  $\textit{\textbf{Q}}$ can  prove a  false $\Pi_1$-sentence, since  the negation of such a sentence would be a true $\Sigma_1$-sentence  and so would be provable in $\textit{\textbf{Q}}$.

 One can also show that
a theory is $\Sigma_n$-sound if and only if it is $\Pi_{n+1}$-sound:
if the theory $S$ is $\Sigma_n$-sound and $S\vdash\pi$, where $\pi$ is a $\Pi_{n+1}$-sentence, then write $\pi\!=\!\forall x\,\sigma(x)$ for a $\Sigma_n$-formula $\sigma$; since for every $k\!\in\!\mathbb{N}$ we have $S\vdash\sigma(\overline{k})$, and $\sigma(\overline{k})$ is a $\Sigma_n$-sentence, then $\mathbb{N}\vDash\sigma(\overline{k})$ for every $k\!\in\!\mathbb{N}$, so  $\mathbb{N}\vDash\forall x\,\sigma(x)\!=\!\pi$.

The hierarchy of $\Sigma_n$-sound theories is strict, since  there are some  $\Sigma_n$-sound theories which are not $\Sigma_{n+1}$-sound; this is proved in e.g.
  \cite[Theorem~2.5.]{SalSer17}  and also   \cite[Theorem~4.8.]{KK17}. Therefore, the truth of (even all) the G\"odelian $\Pi_{n+1}$-sentences (respectively,  $\Sigma_{n+1}$-sentences) of a theory does not necessarily imply the truth of its G\"odelian $\Pi_{n+2}$-sentences (respectively,  $\Sigma_{n+2}$-sentences).
\hfill $\diamond$
}\end{remark}

%

\section{Rosserian Sentences and their Truth.}

In Theorem~\ref{thm:pi1g} we saw that G\"odelian  sentences of some theories could be refutable in them (though, they are always unprovable in consistent theories, see Lemma~\ref{lem:ung}).
Rosser's trick (\cite{Rosser36}) constructs an independent sentence for a given theory (which is an {\sc re} extension of $\textit{\textbf{Q}}$), when it is consistent.
Before going into Rosser's construction, let us note that no construction similar to G\"odel's can result in an independent sentence.

\begin{definition}[Pseudo-G\"odelian Sentences]\label{def:pg}
Let us call $\psi$  a pseudo-G\"odelian sentence of a theory $T$ when there are some 
propositional formulas $C_1(p),\cdots,C_n(p)$, over the one propositional variable $p$, and there is 
one  propositional formula $B(p_1,\cdots,p_n)$, over the propositional variables $p_1,\cdots,p_n$,  such that we have 
 $T\vdash\psi\!\leftrightarrow\!
B\big({\tt Pr}_T[\#C_1(\psi)],\cdots,
{\tt Pr}_T[\#C_n(\psi)]\big)$.
\hfill $\diamond $
\end{definition}
For example, the sentences $\mathfrak{P}_T$ and $\mathfrak{R}_T$ for which we have
$$T\vdash\mathfrak{P}_T\!\leftrightarrow\!
\big[\neg{\tt Pr}_T(\#\mathfrak{P}_T)\!\wedge\!\neg{\tt Pr}_T(\#[\neg\mathfrak{P}_T])\big] \textrm{   and   } T\vdash\mathfrak{R}_T\!\leftrightarrow\!
\big[{\tt Pr}_T(\#\mathfrak{R}_T)\!\rightarrow\!\neg{\tt Pr}_T(\#[\neg\mathfrak{R}_T])\big],$$
are both some peudo-G\"odelian sentences of $T$.
For an alternative formulation of the following result see \cite[Exercise~1, p.~149]{Smorynski85}.
\begin{proposition}[Decidability of Pseudo-G\"odelian  Sentences]\label{prop:pg}

\,

\noindent
No pseudo-G\"odelian sentence of $U\!=\!T\!+\!\neg{\tt Con}_T$ can be independent from $U$.
\end{proposition}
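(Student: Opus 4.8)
The plan is to isolate one fact about $U\!=\!T\!+\!\neg{\tt Con}_T$, namely that $U\vdash{\tt Pr}_U(\#\varphi)$ for \emph{every} sentence $\varphi$, and then to let the proposition follow formally from it. Granting this fact, let $\psi$ be a pseudo-G\"odelian sentence of $U$, witnessed by propositional formulas $C_1(p),\dots,C_n(p)$ and $B(p_1,\dots,p_n)$ with $U\vdash\psi\!\leftrightarrow\!B\big({\tt Pr}_U[\#C_1(\psi)],\dots,{\tt Pr}_U[\#C_n(\psi)]\big)$. Since each $C_i(\psi)$ is a sentence, the fact gives $U\vdash{\tt Pr}_U(\#C_i(\psi))$ for all $i$; as $[0\!=\!0]$ is also $U$-provable, we get $U\vdash{\tt Pr}_U(\#C_i(\psi))\!\leftrightarrow\![0\!=\!0]$ for each $i$, and hence, substituting in the propositional context $B$, $U\vdash\psi\!\leftrightarrow\!B\big([0\!=\!0],\dots,[0\!=\!0]\big)$. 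Now $B\big([0\!=\!0],\dots,[0\!=\!0]\big)$ is a propositional combination of the single $\textit{\textbf{Q}}$-provable sentence $[0\!=\!0]$: reading $[0\!=\!0]$ as a propositional atom $q$, either $q\!\rightarrow\!B(q,\dots,q)$ or $q\!\rightarrow\!\neg B(q,\dots,q)$ is a tautology, according as the Boolean function $B$ takes value $1$ or $0$ at $(1,\dots,1)$; combined with $\textit{\textbf{Q}}\vdash[0\!=\!0]$ this yields $\textit{\textbf{Q}}\vdash B\big([0\!=\!0],\dots,[0\!=\!0]\big)$ in the first case and $\textit{\textbf{Q}}\vdash\neg B\big([0\!=\!0],\dots,[0\!=\!0]\big)$ in the second. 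In either case $U$ decides $\psi$, so $\psi$ is not independent from $U$.

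It remains to prove the fact. Fix a sentence $\varphi$. By condition~(I) of Theorem~\ref{thm:pi1g} --- available since $T$ is sufficiently strong (e.g.\ an extension of $\textit{\textbf{PA}}$) --- we have $T\vdash\neg{\tt Con}_T\!\rightarrow\!{\tt Pr}_T(\#\varphi)$. Since $U$ contains $T$ together with the axiom $\neg{\tt Con}_T$, this gives $U\vdash{\tt Pr}_T(\#\varphi)$. Finally, every $T$-proof is a $U$-proof, so the provability predicates obey the formalized monotonicity $\textit{\textbf{Q}}\vdash{\tt Pr}_T(\#\varphi)\!\rightarrow\!{\tt Pr}_U(\#\varphi)$, whence $U\vdash{\tt Pr}_U(\#\varphi)$. (If $U$ happens to be inconsistent, all of this is vacuous, as then $U$ proves every sentence and the proposition is immediate.)

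The argument never looks at the particular shape of the $C_i$ --- whether $C_i(\psi)$ is provably $\psi$, $\neg\psi$, or a truth constant is irrelevant; all that is used is that each $C_i(\psi)$ is \emph{some} sentence together with the fact $U\vdash{\tt Pr}_U(\#C_i(\psi))$. Consequently the same computation shows that the displayed examples $\mathfrak{P}_T$ and $\mathfrak{R}_T$ are both decided by $U\!=\!T\!+\!\neg{\tt Con}_T$. I expect the only delicate point to be the fact itself, specifically the two ingredients of its proof: the formalized ``\emph{ex falso}'' $\neg{\tt Con}_T\!\rightarrow\!{\tt Pr}_T(\#\varphi)$ (condition~(I)) and the formalized monotonicity of the provability predicates --- these are precisely the places where one must invoke that $T$ is strong enough for the standard derivability conditions, rather than an arbitrary {\sc re} extension of $\textit{\textbf{Q}}$.
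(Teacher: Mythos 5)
Your proof is correct and follows essentially the same route as the paper's: both rest on the single fact that $U\vdash{\tt Pr}_U(\#\zeta)$ for every sentence $\zeta$, and then evaluate the Boolean combination $B$ at $(\boldsymbol\top,\cdots,\boldsymbol\top)$ to conclude that $U$ decides $\psi$. The only difference is that the paper simply asserts that fact, whereas you supply a derivation of it (condition (I) plus formalized monotonicity of provability) and rightly flag that this is the one place requiring $T$ to satisfy the usual derivability conditions rather than being a bare {\sc re} extension of $\textit{\textbf{Q}}$.
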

\begin{proof}
Let $\psi$ be a pseudo-G\"odelian sentence of the theory  $U\!=\!T\!+\!\neg{\tt Con}_T$. For every sentence $\zeta$ we have $U\vdash{\tt Pr}_U(\#\zeta)$. Now,   $B(\boldsymbol\top,\cdots,\boldsymbol\top)$, where $\boldsymbol\top$ denotes the propositional truth,  is equivalent to either $\boldsymbol\top$ or $\boldsymbol\bot$, where $\boldsymbol\bot$ denotes the propositional falsum. If    $B(\boldsymbol\top,\cdots,
\boldsymbol\top)
\equiv\boldsymbol\top$, then
$U\vdash B\big({\tt Pr}_U[C_1(\#\psi)],\cdots,
{\tt Pr}_U[C_n(\#\psi)]\big)$; and if $B(\boldsymbol\top,\cdots,
\boldsymbol\top)
\equiv\boldsymbol\bot$, then  
$U\vdash \neg B\big({\tt Pr}_U[C_1(\#\psi)],\cdots,
{\tt Pr}_U[C_n(\#\psi)]\big)$.
As a result, we have either $U\vdash\psi$   or $U\vdash\neg\psi$; thus $\psi$ is not independent from $U$.
\end{proof}

\noindent
In both of the above examples, it can be seen that $U\vdash\neg\mathfrak{P}_U$ and $U\vdash\neg \mathfrak{R}_U$ hold for the theory $U=T\!+\!\neg{\tt Con}_T$. Thus, for getting independent sentences (of consistent theories) one should go beyond the (pesudo-)G\"odelian sentences.

The $T$-provability predicate ${\tt Pr}_T(x)$ in Convention~\ref{conv} is usually constructed from a $T$-{\em proof predicate} ${\tt prf}_T(y,x)$, as ${\tt Pr}_T(x)\!\boldsymbol=\!\exists y\,{\tt prf}_T(y,x)$; where ${\tt prf}_T(y,x)$ is a $\Delta_1$-formula stating that ``$y$ is (the G\"odel code of) a proof in $T$ of the formula (coded by) $x$''.

\begin{convention}\label{convr}
Let us fix a proof predicate  of $T$ as the  $\Delta_1$-formula ${\tt prf}_T(y,x)$ that satisfies the following for every sentence $\varphi$:

\qquad $T\vdash\varphi \; \iff \; \textit{\textbf{Q}}\vdash{\tt prf}_T(\overline{m},\#\varphi)$ for some $m\!\in\!\mathbb{N}$.

\qquad $T\nvdash\varphi \; \iff \;  \textit{\textbf{Q}}\vdash\neg{\tt prf}_T(\overline{n},\#\varphi)$ for every $n\!\in\!\mathbb{N}$.
\hfill $\diamond$
\end{convention}

\begin{definition}[Rosserian Provability and Rosserian Sentences]\label{def:r}
The following  $\Sigma_1$-formula, with the free variable $x$, 
is  the   Rosserian Provability predicate of $T$:
$${\tt R.Pr}_T(x) = \exists y\,[{\tt prf}_T(y,x)\wedge\forall z\!<\!y\,\neg{\tt prf}_T(z,\neg x)].$$

\noindent
A sentence $\rho$ is called a Rosserian sentence of $T$ when we have
$T\vdash\rho\!\leftrightarrow\!\neg{\tt R.Pr}_T(\#\rho)$.
\hfill $\diamond $
\end{definition}

The independence of the Rosserian sentences (from the theory in question) follows form the following basic properties of the Rosserian provability:

\begin{lemma}\label{lem:unr}
If $T$ is consistent, then for every sentence $\varphi$ we have

(1)\; $T\vdash\varphi \iff  \textit{\textbf{Q}}\vdash{\tt R.Pr}_T(\#\varphi)$.

(2)\;  $T\vdash\neg\varphi \;\Longrightarrow\;  \textit{\textbf{Q}}\vdash\neg{\tt R.Pr}_T(\#\varphi)$.
\end{lemma}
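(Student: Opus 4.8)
\textbf{Proof plan for Lemma~\ref{lem:unr}.}
The plan is to unwind the definition of ${\tt R.Pr}_T$ using Convention~\ref{convr} and the $\Sigma_1$-completeness of $\textit{\textbf{Q}}$, treating the two items separately. For part~(1), the direction $T\vdash\varphi\Rightarrow\textit{\textbf{Q}}\vdash{\tt R.Pr}_T(\#\varphi)$ is the substantive one. Assuming $T\vdash\varphi$, Convention~\ref{convr} gives some $m$ with $\textit{\textbf{Q}}\vdash{\tt prf}_T(\overline m,\#\varphi)$. Since $T$ is consistent, $T\nvdash\neg\varphi$, so by the second clause of Convention~\ref{convr} we have $\textit{\textbf{Q}}\vdash\neg{\tt prf}_T(\overline n,\#[\neg\varphi])$ for \emph{every} $n$, in particular for the finitely many $n<m$; since $\textit{\textbf{Q}}$ proves $\forall z\,[z<\overline m\!\rightarrow\!(z\!=\!\overline 0\vee\cdots\vee z\!=\!\overline{m-1})]$, it follows that $\textit{\textbf{Q}}\vdash\forall z\!<\!\overline m\,\neg{\tt prf}_T(z,\#[\neg\varphi])$, and then $\textit{\textbf{Q}}\vdash{\tt R.Pr}_T(\#\varphi)$ by existential introduction with witness $\overline m$. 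Conversely, if $\textit{\textbf{Q}}\vdash{\tt R.Pr}_T(\#\varphi)$ then, since ${\tt R.Pr}_T(\#\varphi)$ is a $\Sigma_1$-sentence, it is true in $\mathbb{N}$; so there is a genuine $k\in\mathbb{N}$ with $\mathbb{N}\vDash{\tt prf}_T(\overline k,\#\varphi)$, hence (as ${\tt prf}_T$ is $\Delta_1$ and $\textit{\textbf{Q}}$ is $\Sigma_1$-complete) $\textit{\textbf{Q}}\vdash{\tt prf}_T(\overline k,\#\varphi)$, which by Convention~\ref{convr} yields $T\vdash\varphi$.

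For part~(2), suppose $T\vdash\neg\varphi$; again by consistency $T\nvdash\varphi$, so Convention~\ref{convr} gives $\textit{\textbf{Q}}\vdash\neg{\tt prf}_T(\overline n,\#\varphi)$ for every $n\in\mathbb{N}$. We must show $\textit{\textbf{Q}}\vdash\neg{\tt R.Pr}_T(\#\varphi)$, i.e.\ $\textit{\textbf{Q}}\vdash\forall y\,[{\tt prf}_T(y,\#\varphi)\!\rightarrow\!\exists z\!<\!y\,{\tt prf}_T(z,\#[\neg\varphi])]$. Since $T\vdash\neg\varphi$, fix $m$ with $\textit{\textbf{Q}}\vdash{\tt prf}_T(\overline m,\#[\neg\varphi])$. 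The idea is: any putative $T$-proof $y$ of $\varphi$ must have $y\geqslant\overline m$ in $\textit{\textbf{Q}}$'s eyes — because for each of the finitely many $n<m$ we have $\textit{\textbf{Q}}\vdash\neg{\tt prf}_T(\overline n,\#\varphi)$, so $\textit{\textbf{Q}}$ proves that ${\tt prf}_T(y,\#\varphi)$ forces $y\neq\overline 0,\ldots,\overline{m-1}$, whence (using $\textit{\textbf{Q}}$'s trichotomy-type facts on initial segments) $\textit{\textbf{Q}}\vdash{\tt prf}_T(y,\#\varphi)\!\rightarrow\!\overline m<y\vee\overline m=y$; either way $\overline m<y\vee\overline m=y$ gives a $z<y$ (namely $\overline m$, or if $\overline m=y$ one instead appeals to the fact that we then also have $y\neq\overline m$ is impossible so actually $\overline m<y$, after noting $T\nvdash\varphi$ rules out $y=\overline m$ being a proof of \emph{both}) with ${\tt prf}_T(z,\#[\neg\varphi])$. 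A cleaner route: since $T\nvdash\varphi$, in fact $\textit{\textbf{Q}}\vdash\neg{\tt prf}_T(\overline n,\#\varphi)$ for all $n\leqslant m$, so $\textit{\textbf{Q}}\vdash{\tt prf}_T(y,\#\varphi)\!\rightarrow\!\overline m<y$, and then $z:=\overline m$ witnesses $\exists z\!<\!y\,{\tt prf}_T(z,\#[\neg\varphi])$, giving $\textit{\textbf{Q}}\vdash\neg{\tt R.Pr}_T(\#\varphi)$.

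The main obstacle is the bit of bounded arithmetic needed to pass from ``$\textit{\textbf{Q}}\vdash\neg{\tt prf}_T(\overline n,\#\varphi)$ for each individual $n<m$'' to ``$\textit{\textbf{Q}}\vdash\forall z\!<\!\overline m\,\neg{\tt prf}_T(z,\#\varphi)$'' (and the analogous step comparing an arbitrary $y$ against the fixed numeral $\overline m$). This rests on the standard facts that $\textit{\textbf{Q}}$ proves, for each standard $m$, the sentence $\forall z\,[z<\overline m\leftrightarrow(z=\overline 0\vee\cdots\vee z=\overline{m-1})]$ and the dual $\forall y\,[(y\neq\overline 0\wedge\cdots\wedge y\neq\overline{m-1})\rightarrow\overline m\leqslant y]$ — a routine induction on the standard $m$ using $\textit{\textbf{Q}}$'s axioms on $0$, successor and $<$. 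Granting these, both items follow by the elementary logical manipulations sketched above, and I would only spell out part~(1)'s forward direction and part~(2) in detail, leaving the reverse direction of~(1) as the short $\Sigma_1$-completeness argument.
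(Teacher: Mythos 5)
Your proof is correct and follows essentially the same route as the paper's: part (2) is exactly the paper's argument (fix an $m$ with $\textit{\textbf{Q}}\vdash{\tt prf}_T(\overline m,\#[\neg\varphi])$, use consistency to get $\textit{\textbf{Q}}\vdash\neg{\tt prf}_T(\overline n,\#\varphi)$ for all $n\leqslant m$, conclude inside $\textit{\textbf{Q}}$ that any proof $y$ of $\varphi$ satisfies $y>\overline m$ and take $z=\overline m$), while for part (1) the paper simply compresses your explicit $\textit{\textbf{Q}}$-derivation into the observation that, for consistent $T$, the $\Sigma_1$-sentence ${\tt R.Pr}_T(\#\varphi)$ is true exactly when $T\vdash\varphi$, and then invokes $\Sigma_1$-completeness and soundness of $\textit{\textbf{Q}}$. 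The short detour about the case $\overline m=y$ in your part (2) is unnecessary --- your ``cleaner route'' is the right (and the paper's) argument.
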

\begin{proof}
For (1) it suffices to note that for a consistent theory $T$ we have:   $T\vdash\varphi$ if and only if the $\Sigma_1$-sentence ${\tt R.Pr}_T(\#\varphi)$ is true. For (2) suppose that $T\vdash\neg\varphi$; then by Convention~\ref{convr} for some $m$ we have $\textit{\textbf{Q}}\vdash{\tt prf}_T(\overline{m},\#[\neg\varphi])$. Now, reason inside $\textit{\textbf{Q}}$: for any $y$ with ${\tt prf}_T(y,\#\varphi)$ we should have $y\!>\!\overline{m}$, since no $i\!\leqslant\!\overline{m}$ (which are $i=0,\cdots,\overline{m}$) could satisfy ${\tt prf}_T(\overline{i},\#\varphi)$ by Convention~\ref{convr}, and so for some $z\!<\!y$, which is $z\!=\!\overline{m}$, we have ${\tt prf}_T(z,\#[\neg\varphi])$. Thus, we have   $\forall y [{\tt prf}_T(y,\#\varphi)\!\rightarrow\!\exists z\!<\!y\,{\tt prf}_T(z,\#[\neg\varphi])]$ or, equivalently, $\neg{\tt R.Pr}_T(\#\varphi)$.
\end{proof}

Now, we can characterize the independent sentences of $T$   similarly to Proposition~\ref{prop:g}:

\begin{proposition}[Characterizing Independent   Sentences]\label{prop:r}
Let $\varphi$ be a sentence.

\noindent
The following are equivalent:
\begin{enumerate}\itemindent=5em
\item[(1)] $\varphi$ is independent from $T$, i.e., $T\nvdash\varphi$ and $T\nvdash\neg\varphi$;
\item[(2)] $\varphi$ is a Rosserian sentence of some consistent extension $U$ of $T$;
\end{enumerate}
and are implied by the following:
\begin{enumerate}\itemindent=5em
\item[(3)] $T+[\varphi\!\leftrightarrow\!
\neg{\tt R.Pr}_T(\#\varphi)]$ is consistent.
\end{enumerate}
\end{proposition}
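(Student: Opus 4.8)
The plan is to mirror the structure of the proof of Proposition~\ref{prop:g}, replacing the provability predicate with the Rosserian provability predicate and invoking Lemma~\ref{lem:unr} in place of Convention~\ref{conv} at the key points. I would prove the cycle $(3\!\Rightarrow\!1)$, $(1\!\Rightarrow\!2)$, and $(2\!\Rightarrow\!1)$, and separately note that $(1)\!\Leftrightarrow\!(2)$ does not require L\"ob's Rule (which is not even available for ${\tt R.Pr}_T$), which is why $(3)$ only appears as a sufficient condition here rather than an equivalent one.

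\textbf{$(3\!\Rightarrow\!1)$.} Suppose $T\!+\![\varphi\!\leftrightarrow\!\neg{\tt R.Pr}_T(\#\varphi)]$ is consistent. If $T\vdash\varphi$, then by Lemma~\ref{lem:unr}(1) we have $\textit{\textbf{Q}}\vdash{\tt R.Pr}_T(\#\varphi)$, hence $T\vdash\neg[\varphi\!\leftrightarrow\!\neg{\tt R.Pr}_T(\#\varphi)]$, contradicting consistency. If $T\vdash\neg\varphi$, then by Lemma~\ref{lem:unr}(2) we have $\textit{\textbf{Q}}\vdash\neg{\tt R.Pr}_T(\#\varphi)$, so again $T\vdash\neg[\varphi\!\leftrightarrow\!\neg{\tt R.Pr}_T(\#\varphi)]$, a contradiction. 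Hence $\varphi$ is independent from $T$. (Note that consistency of $T$ itself follows from the consistency of the listed extension, so Lemma~\ref{lem:unr} applies.)

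\textbf{$(1\!\Rightarrow\!2)$.} Suppose $\varphi$ is independent from $T$. By the Diagonal Lemma (Lemma~\ref{lem:d}) applied to an appropriate formula built from the Rosserian provability predicate of the extended theory, there is a sentence $\xi$ with $T\vdash\xi\!\leftrightarrow\![\varphi\!\leftrightarrow\!\neg{\tt R.Pr}_{T+\xi}(\#\varphi)]$; set $U\!=\!T\!+\!\xi$, so that $U\vdash\varphi\!\leftrightarrow\!\neg{\tt R.Pr}_U(\#\varphi)$, i.e.\ $\varphi$ is a Rosserian sentence of $U$. It remains to show $U$ is consistent. If not, $T\vdash\neg\xi$, so (i) $T\vdash\neg[\varphi\!\leftrightarrow\!\neg{\tt R.Pr}_U(\#\varphi)]$; and from $U\vdash\varphi$ (trivially, as $U$ is inconsistent) and $U\vdash\neg\varphi$ we would get, via Lemma~\ref{lem:unr} applied to $U$ --- here one must be slightly careful, since an inconsistent $U$ makes Lemma~\ref{lem:unr} vacuous, so instead argue directly: from $T\vdash\neg\xi$ and the diagonal equivalence one obtains $T\vdash\varphi$ or $T\vdash\neg\varphi$ by a short case analysis on whether ${\tt R.Pr}_U(\#\varphi)$ is forced, contradicting independence. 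This bookkeeping is the one genuinely fiddly point. The cleaner route: observe that if $U$ is inconsistent then by Convention~\ref{convr} (applied to $U$, which is still {\sc re}) $\textit{\textbf{Q}}\vdash{\tt prf}_U(\overline{m},\#\varphi)$ and $\textit{\textbf{Q}}\vdash{\tt prf}_U(\overline{m'},\#[\neg\varphi])$ for some $m,m'$; whichever proof comes first decides ${\tt R.Pr}_U(\#\varphi)$ in $\textit{\textbf{Q}}$, so $T\vdash\neg\xi$ together with the diagonal equivalence yields $T\vdash\varphi$ or $T\vdash\neg\varphi$, the desired contradiction.)

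\textbf{$(2\!\Rightarrow\!1)$.} Suppose $\varphi$ is a Rosserian sentence of a consistent extension $U\!\supseteq\!T$. By Lemma~\ref{lem:unr} applied to $U$ (consistent, {\sc re}, extending $\textit{\textbf{Q}}$), the standard Rosser argument gives $U\nvdash\varphi$ and $U\nvdash\neg\varphi$: if $U\vdash\varphi$ then $\textit{\textbf{Q}}\vdash{\tt R.Pr}_U(\#\varphi)$ whence $U\vdash\neg\varphi$, contradicting consistency; if $U\vdash\neg\varphi$ then $\textit{\textbf{Q}}\vdash\neg{\tt R.Pr}_U(\#\varphi)$ whence $U\vdash\varphi$, again a contradiction. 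Since $T\subseteq U$, a fortiori $T\nvdash\varphi$ and $T\nvdash\neg\varphi$, so $\varphi$ is independent from $T$.

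\textbf{Main obstacle.} The only delicate point is the consistency argument in $(1\!\Rightarrow\!2)$: one must ensure that assuming $U\!=\!T\!+\!\xi$ inconsistent really does contradict the independence of $\varphi$ over $T$, and this requires unwinding the behaviour of ${\tt R.Pr}_U$ under the hypothesis that $U$ proves everything --- Lemma~\ref{lem:unr} cannot be applied to $U$ directly in that degenerate case, so one falls back on Convention~\ref{convr} to locate explicit proof codes and let $\textit{\textbf{Q}}$ adjudicate the Rosser comparison. Everything else is a routine transcription of the G\"odelian argument with ${\tt R.Pr}$ in place of ${\tt Pr}$ and Lemma~\ref{lem:unr} in place of Convention~\ref{conv}.
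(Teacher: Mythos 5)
Your proposal is correct and follows essentially the same route as the paper: the cycle $(3\!\Rightarrow\!1)$, $(1\!\Rightarrow\!2)$, $(2\!\Rightarrow\!1)$ via Lemma~\ref{lem:unr}, with the "cleaner route" you settle on for the consistency of $U$ in $(1\!\Rightarrow\!2)$ — locating the least proof codes $m,n$ via Convention~\ref{convr} and letting $\textit{\textbf{Q}}$ decide the Rosser comparison to derive $T\vdash\varphi$ or $T\vdash\neg\varphi$ — being exactly the paper's two-case argument ($m\leqslant n$ versus $n<m$). The delicate point you flag is indeed the one the paper spells out in detail, and your handling of it is sound.
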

\begin{proof}
First we show the equivalence of (1) and (2).

$(1\!\Rightarrow\!2)$: By Lemma~\ref{lem:d} for some sentence $\xi$ we have
$T\vdash\xi\!\leftrightarrow\![\varphi
 \!\leftrightarrow\!\neg{\tt R.Pr}_{T+\xi}(\#\varphi)]$. Let $U\!=\!T\!+\!\xi$; then $U\vdash\varphi\!\leftrightarrow\!
\neg{\tt R.Pr}_U(\#\varphi)$ which shows that $\varphi$ is a Rosserian sentence of $U$.  We show that $U$ is consistent. If not, then $T\vdash\neg\xi$.
Thus, ($\ast$) $T\vdash\neg[\varphi
 \!\leftrightarrow\!\neg{\tt R.Pr}_{U}(\#\varphi)]$. Also,  $U\vdash\varphi$ and $U\vdash\neg\varphi$, and so by Convention~\ref{convr} there are  $m,n\!\in\!\mathbb{N}$ such that $\textit{\textbf{Q}}\vdash{\tt prf}_U(\overline{m},\#\varphi)$
 and
 $\textit{\textbf{Q}}\vdash{\tt prf}_U(\overline{n},\#[\neg\varphi])$; we can assume that $m$ and $n$ are the least such numbers. 

(i) If $m\!\leqslant\!n$, then $\textit{\textbf{Q}}\vdash {\tt prf}_U(\overline{m},\#\varphi) \wedge
\forall z\!<\!\overline{m}\neg{\tt prf}_U(z,\#[\neg\varphi])$ and so
$\textit{\textbf{Q}}\vdash {\tt R.Pr}_U(\#\varphi)$, which implies by ($\ast$) that $T\vdash\varphi$; contradicting (1).

(ii) If $n\!<\!m$, then $\textit{\textbf{Q}}\vdash
\forall y[{\tt prf}_U(y,\#\varphi)
\!\rightarrow\!
y\!\geqslant\!\overline{m}
\!\rightarrow\!
\exists z\!<\!y\, {\tt prf}_U(z,\#[\neg\varphi])]$, since one can take $z\!=\!n$, and so $\textit{\textbf{Q}}\vdash\neg{\tt R.Pr}_U(\#\varphi)$, which implies by ($\ast$) that $T\vdash\neg\varphi$; contradicting (1) again.

So, $U$ must be consistent.

$(2\!\Rightarrow\!1)$:
It suffices to show that $\varphi$ is independent from $U$. If  $U\vdash\rho$, then we should have on the one hand  $U\vdash{\tt R.Pr}_U(\#\rho)$ by Lemma~\ref{lem:unr}(1), and on the other hand   $U\vdash\neg{\tt R.Pr}_U(\#\rho)$ by Definition~\ref{def:r}; thus, $U$ could not be consistent. Also $U\vdash\neg\rho$ would imply on the one hand   $U\vdash\neg{\tt R.Pr}_U(\#\rho)$ by  Lemma~\ref{lem:unr}(2), and on the other hand   $U\vdash{\tt R.Pr}_U(\#\rho)$ by Definition~\ref{def:r}; contradicting $U$'s consistency again.

Now, we show that (3) implies (1); and so (2) too.

$(3\!\Rightarrow\!1)$: Note that the theory $T$ is consistent by the assumption. If $T\vdash\varphi$, then by Lemma~\ref{lem:unr}(1) we would have $T\vdash{\tt R.Pr}_T(\#\varphi)$, and so $T\vdash\neg[\varphi\!\leftrightarrow\!
\neg{\tt R.Pr}_T(\#\varphi)]$. If $T\vdash\neg\varphi$, then Lemma~\ref{lem:unr}(2) would imply that $T\vdash\neg{\tt R.Pr}_T(\#\varphi)$, and so we would have $T\vdash\neg[\varphi\!\leftrightarrow\!
\neg{\tt R.Pr}_T(\#\varphi)]$ again.
\end{proof}

\begin{remark}[L\"ob-Like Rule for Rosserian  Provability]\label{rem:rlr}{\rm
Let us note that the contraposition of the implication $(1\!\Rightarrow\!3)$ in Proposition~\ref{prop:r} says that if $T\vdash\varphi\!\leftrightarrow\!{\tt R.Pr}_T(\#\varphi)$, i.e., if $\varphi$ is a Rosser-type Henkin sentence so called by Kurahashi  (in~\cite{Kurahashi16}), then $\varphi$ is not independent from $T$. Actually, it is shown in~\cite{Kurahashi16} that there are {\em standard} proof predicates ${\tt prf'}_T(y,x)$ which have independent Rosser-type Henkin sentences, and there are standard proof predicates ${\tt prf''}_T(y,x)$ none of whose Rosser-type Henkin sentences are independent.
The latter proof predicates satisfy $(1\!\Rightarrow\!3)$ in Proposition~\ref{prop:r} and satisfy a L\"ob-like rule for Rosserian provability; while the former ones do not satisfy $(1\!\Rightarrow\!3)$ in Proposition~\ref{prop:r}, and  do not satisfy any L\"ob-like rule for Rosserian provability. So, the implication $(1\!\Rightarrow\!3)$ in Proposition~\ref{prop:r} depends on the proof predicate ${\tt prf}_T(y,x)$, and is not robust.
\hfill $\diamond$
}\end{remark}

Unlike G\"odelian $\Pi_1$-sentences, all the Rosserian $\Pi_1$-sentences of consistent theories are true, and like G\"odelian $\Sigma_1$-sentence, all of their Rosserian $\Sigma_1$-sentences are false:

\begin{theorem}[On the Truth of the Rosserian $\Pi_1,\Sigma_1$-Sentences]\label{thm:1r}
Every Rosserian $\Pi_1$-sentence of $T$ is true, and every  Rosserian $\Sigma_1$-sentence of $T$ is false, if $T$ is consistent.
\end{theorem}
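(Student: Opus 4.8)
The plan is to handle the two halves separately, using Lemma~\ref{lem:unr} as the main tool, in close analogy with the arguments for G\"odelian sentences but exploiting the extra strength of Rosserian provability under mere consistency. First I would treat the $\Sigma_1$-case, which is the easier one: suppose $\rho$ is a Rosserian $\Sigma_1$-sentence of a consistent $T$ and, for contradiction, that $\mathbb{N}\vDash\rho$. Then $\rho$ is a true $\Sigma_1$-sentence, hence $\textit{\textbf{Q}}\vdash\rho$ by $\Sigma_1$-completeness, so $T\vdash\rho$; by Lemma~\ref{lem:unr}(1) this gives $\textit{\textbf{Q}}\vdash{\tt R.Pr}_T(\#\rho)$, hence $T\vdash{\tt R.Pr}_T(\#\rho)$. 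But $T\vdash\rho\!\leftrightarrow\!\neg{\tt R.Pr}_T(\#\rho)$, so $T\vdash\neg\rho$ as well, contradicting the consistency of $T$. (Equivalently one can invoke Proposition~\ref{prop:r}$(3\!\Rightarrow\!1)$, or simply note that this is the Rosserian analogue of Proposition~\ref{prop:tg} with Lemma~\ref{lem:unr} in place of Lemma~\ref{lem:ung}.)

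For the $\Pi_1$-case, let $\rho$ be a Rosserian $\Pi_1$-sentence of a consistent $T$ and suppose toward a contradiction that $\mathbb{N}\nvDash\rho$. Then $\neg\rho$ is a true $\Sigma_1$-sentence, so $\textit{\textbf{Q}}\vdash\neg\rho$, hence $T\vdash\neg\rho$. By Lemma~\ref{lem:unr}(2) this yields $\textit{\textbf{Q}}\vdash\neg{\tt R.Pr}_T(\#\rho)$, hence $T\vdash\neg{\tt R.Pr}_T(\#\rho)$, and then the defining equivalence $T\vdash\rho\!\leftrightarrow\!\neg{\tt R.Pr}_T(\#\rho)$ forces $T\vdash\rho$ — contradicting $T\vdash\neg\rho$ together with consistency. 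So every Rosserian $\Pi_1$-sentence is true.

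I do not anticipate a serious obstacle here; the content is essentially packaging Lemma~\ref{lem:unr}. The one point that requires a little care — and is the closest thing to a ``main obstacle'' — is making sure the right direction of Lemma~\ref{lem:unr} is used in each case: part~(1) is a biconditional and transfers provability of $\rho$ into (refutable) truth of ${\tt R.Pr}_T(\#\rho)$, whereas part~(2) is only a one-way implication, from refutability of $\rho$ to refutability of ${\tt R.Pr}_T(\#\rho)$, and it is exactly this asymmetry that makes the $\Pi_1$-case go through for Rosserian sentences where it fails for G\"odelian ones (Theorem~\ref{thm:pi1g}). I would also remark explicitly that consistency of $T$ is used twice: once to invoke Lemma~\ref{lem:unr}, and once to derive the final contradiction from $T$ proving both $\rho$ and $\neg\rho$.
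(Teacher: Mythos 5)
Your proof is correct and follows essentially the same route as the paper's: both halves reduce to the $\Sigma_1$-completeness of $\textit{\textbf{Q}}$ plus the properties of ${\tt R.Pr}_T$ established in Lemma~\ref{lem:unr}. The only cosmetic difference is that the paper cites the independence of Rosserian sentences (Proposition~\ref{prop:r}) as a black box, whereas you inline the relevant direction of its proof by applying Lemma~\ref{lem:unr} directly.
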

\begin{proof}
If a Rosserian $\Pi_1$-sentence were false, then its negation would be  a true $\Sigma_1$-sentence, and so would be provable in the theory $\textit{\textbf{Q}}$; contradicting Rosser's theorem on the independence of Rosserian sentences (Proposition~\ref{prop:r}). If a Rosserian $\Sigma_1$-sentence were true, then it would be provable in the theory $\textit{\textbf{Q}}$; contradicting the unprovability of Rosserian sentences.
\end{proof}

However, for $n\!\geqslant\!1$, the truth of all  the G\"odelian $\Pi_{n+1}$-sentences is equivalent to the truth of all the Rosserian $\Pi_{n+1}$-sentences; and the truth of all  the G\"odelian $\Sigma_{n+1}$-sentences is equivalent to the truth of all the Rosserian $\Sigma_{n+1}$-sentences:

\begin{theorem}[On the Truth of the Rosserian $\Pi_{n+1},\Sigma_{n+1}$-Sentences]\label{thm:allr}
Fix $n\!\geqslant\!1$.

\noindent
All the Rosserian $\Pi_{n+1}$-sentences of $T$ are true if and only if $T$ is $\Pi_{n+1}$-sound.

\noindent
All the Rosserian $\Sigma_{n+1}$-sentences of $T$ are true if and only if $T$ is $\Sigma_{n+1}$-sound.
 \end{theorem}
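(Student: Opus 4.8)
The plan is to transcribe, essentially line by line, the proof of Theorem~\ref{thm:m1}, replacing the provability predicate ${\tt Pr}_T$ throughout by the Rosserian one ${\tt R.Pr}_T$, replacing Lemma~\ref{lem:ung} by Lemma~\ref{lem:unr}, and replacing the (here trivial) independence of G\"odelian sentences by the independence of Rosserian sentences supplied by Proposition~\ref{prop:r}. Fix $\Gamma$ to be either $\Pi_{n+1}$ or $\Sigma_{n+1}$; the hypothesis $n\!\geqslant\!1$ will be used only to guarantee that $\Gamma$ is closed (modulo $\textit{\textbf{Q}}$-provable equivalence, after prenexing) under conjunction and disjunction and that $\Sigma_1\!\subseteq\!\Gamma$ and $\Pi_1\!\subseteq\!\Gamma$ — closure and containment facts that fail for $\Pi_1$ and $\Sigma_1$, which is precisely why those two classes behave differently and are treated separately in Theorem~\ref{thm:1r}.

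For the ``if'' direction I would argue as follows. Suppose $T$ is $\Gamma$-sound; then $T$ is in particular consistent. Let $\rho$ be a Rosserian $\Gamma$-sentence of $T$, so $T\vdash\rho\!\leftrightarrow\!\neg{\tt R.Pr}_T(\#\rho)$; by Proposition~\ref{prop:r} (with $U\!=\!T$), $\rho$ is independent from $T$, so in particular $T\nvdash\rho$. By Lemma~\ref{lem:unr}(1) this means the $\Sigma_1$-sentence ${\tt R.Pr}_T(\#\rho)$ is \emph{false}. Since $T\vdash\neg\rho\!\leftrightarrow\!{\tt R.Pr}_T(\#\rho)$, the theory $T\!+\!\neg\rho$ proves a false $\Sigma_1$-sentence, hence is not $\Sigma_1$-sound, hence — using $\Sigma_1\!\subseteq\!\Gamma$ — not $\Gamma$-sound; so by Lemma~\ref{lem:2s} the theory $T\!+\!\rho$ is $\Gamma$-sound, and since $\rho$ is a $\Gamma$-sentence provable in $T\!+\!\rho$ we conclude that $\rho$ is true.

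For the ``only if'' direction, assume that all the Rosserian $\Gamma$-sentences of $T$ are true. First note that $T$ is consistent, for otherwise every sentence — including a false $\Gamma$-sentence such as $0\!\neq\!0$ — would be a Rosserian sentence of $T$. Let $\varsigma$ be any $\Gamma$-sentence with $T\vdash\varsigma$; I must show $\mathbb{N}\vDash\varsigma$. Since $\neg{\tt R.Pr}_T(x)$ is a $\Pi_1$-formula and $\Pi_1\!\subseteq\!\Gamma$ with $\Gamma$ closed under conjunction, the formula $\varsigma\!\wedge\!\neg{\tt R.Pr}_T(x)$ is (equivalent over $\textit{\textbf{Q}}$ to) a $\Gamma$-formula, so Lemma~\ref{lem:d} yields a $\Gamma$-sentence $\zeta$ with $\textit{\textbf{Q}}\vdash\zeta\!\leftrightarrow\![\varsigma\!\wedge\!\neg{\tt R.Pr}_T(\#\zeta)]$. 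From $T\vdash\varsigma$ we get $T\vdash\zeta\!\leftrightarrow\!\neg{\tt R.Pr}_T(\#\zeta)$, i.e.\ $\zeta$ is a Rosserian $\Gamma$-sentence of $T$; by hypothesis $\zeta$ is true, and then the soundness of $\textit{\textbf{Q}}$ gives $\mathbb{N}\vDash\varsigma\!\wedge\!\neg{\tt R.Pr}_T(\#\zeta)$, whence $\mathbb{N}\vDash\varsigma$.

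Since the argument is essentially a verbatim adaptation of Theorem~\ref{thm:m1}, I do not expect a genuine obstacle. The only steps needing a little care are: (a) verifying that $\Gamma$ really does have the Boolean closure properties and the containments $\Sigma_1\!\subseteq\!\Gamma$, $\Pi_1\!\subseteq\!\Gamma$ used above, which is where the restriction $n\!\geqslant\!1$ is essential; and (b) passing from $T\nvdash\rho$ to the statement that ``${\tt R.Pr}_T(\#\rho)$ is a false $\Sigma_1$-sentence'', which is exactly the content of Lemma~\ref{lem:unr}(1) — the Rosserian analogue of the role played by Lemma~\ref{lem:ung} in the G\"odelian case — together with the invocation of the independence half $(2)\!\Rightarrow\!(1)$ of Proposition~\ref{prop:r}.
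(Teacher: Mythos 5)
Your proposal is correct and follows exactly the route the paper takes: its proof of Theorem~\ref{thm:allr} is precisely ``repeat the proof of Theorem~\ref{thm:m1}, using Lemma~\ref{lem:unr} in place of Lemma~\ref{lem:ung} to see that ${\tt R.Pr}_T(\#\rho)$ is a false $\Sigma_1$-sentence,'' which is what you have written out in detail. Your added remarks on the closure properties of $\Gamma$ (where $n\geqslant 1$ is used) and the appeal to Proposition~\ref{prop:r} for unprovability are consistent with, and merely make explicit, what the paper leaves implicit.
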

\begin{proof}
This can be proved in the same lines of the proof of  Theorem~\ref{thm:m1}   by using Lemma~\ref{lem:unr} (instead of Lemma~\ref{lem:ung}) which implies that
${\tt R.Pr}_T(\#\rho)$ is a false $\Sigma_1$-sentence  when $\rho$ is a Rosserian sentence  of  consistent theory $T$.
\end{proof}

Whence, all the Rosserian sentences of $T$ are true if and only if $T$ is sound; cf. \cite[Theorem~24.7.]{Smith13}.

\section{Conclusions.}
The first one who talked about the truth of  G\"odelian sentences was G\"odel himself \cite{Godel31}. This turned into a serious debate with \cite{Godel51} in which (what we call now) the {\em G\"odel Disjunction} was announced; see \cite{Feferman06} and \cite{HW16} and the references therein.
The so called {\em Anti-Mechanism Thesis}, or the {\em Lucas-Penrose Argument}, started with \cite{Lucas61} and popularized by \cite{Penrose89}; see also \cite{NN58} and \cite{Putnam60}. After that, there has been a large discussion on the truth of G\"odelian sentences; see e.g.
 \cite{Dummett63}, \cite{Smorynski77}, \cite{Boolos90}, \cite{Shapiro98},
\cite{Tennant01}, \cite{Raatikainen05}, \cite{Peregin07}, \cite{Milne07}, \cite{Sereny11}, \cite{Isaacson11}, \cite{BS12},
\cite{PP15} and  \cite{PP16}.
As shown above, the consistency of a theory need not imply the truth of (all of) its G\"odelian ($\Pi_1$-)sentences; but does imply the truth of its all Rosserian  $\Pi_1$-sentences.
One wonders why the proponents of the anti-mechanism thesis have not used the Rosserian  ($\Pi_1$-)sentences for their reasoning; since the truth of those sentences are straightforward (and immediately follows from the consistency of the theory). Though, the opponents have argued that actually for ``seeing'' the truth of G\"odelian  ($\Pi_1$-)sentences one should ``see'' (at least) the consistency of the theory (and indeed, more than that).  Our old and new
results are summarized in the following diagram; note that the conditions get (strictly) stronger from bottom to top.

\begin{table}[h]
\begin{center}
{\small
\begin{tabular}{||rcl||}
\hline
Soundness &
$\!\!\!\equiv\!\!\!$ &
Truth of G\"odelian and  Rosserian  Sentences \\
 & $\!\!\vdots\!\!$ & \\
$\Sigma_{n+1}$ ($\Pi_{n+2}$) Soundness & $\!\!\!\equiv\!\!\!$ &
Truth of G\"odelian, Rosserian $\Sigma_{n+1},\Pi_{n+2}$ Sentences \\
 & $\!\!\vdots\!\!$ & \\
$\Sigma_2$ ($\Pi_3$) Soundness   & $\!\!\!\equiv\!\!\!$ &
Truth of G\"odelian,  Rosserian $\Sigma_2,\Pi_3$ Sentences \\
$\Sigma_1$ ($\Pi_2$) Soundness  & $\!\!\!\equiv\!\!\!$ &
Truth of G\"odelian, Rosserian $\Pi_2$ Sentences \\
Consistency of $T\!\boldsymbol+\!{\tt Con}_T$ & $\!\!\!\equiv\!\!\!$ &
Truth of  G\"odelian $\Pi_1$ Sentences \\
Consistency ($\Pi_1$ Soundness) & $\!\!\!\equiv\!\!\!$ &
Truth of  Rosserian $\Pi_1$ Sentences \\[-0.5em]
\dotfill & $\!\!\!\!\!\ldots\!\!\!\!\!$  & \dotfill \\
Consistency ($\Pi_1$ Soundness) & $\!\!\!\equiv\!\!\!$ &
Falsity of G\"odelian, Rosserian $\Sigma_1$ Sentences \\
\hline
\end{tabular}
}
\end{center}
\end{table}

\paragraph{Acknowledgements.}
{This research is supported by the grant $\mathcal{N}^{\underline{O}}$\,{\sf 98013437} of the Iran National Science Foundation ($\mathtt{INSF}$).} The authors warmly thank {\sc Kaave Lajevardi}  for the most helpful discussions and comments; this is a continuation of a project that he started a while ago and which have resulted in \cite{LajSal19} and some other works to appear in the near future.

\end{document}